\documentclass{amsart}

\usepackage{amsmath}

\usepackage{amsfonts}

\usepackage{hyperref}

\usepackage{latexsym}

\usepackage{amssymb}

\usepackage[ansinew]{inputenc}

\usepackage{amscd}

\parindent=0pt

\newtheorem{theorem}{Theorem}[section]
\newtheorem{theoreme}{Theorem}
\newtheorem{lemma}[theorem]{Lemma}
\newtheorem{corollary}[theorem]{Corollary}
\newtheorem{proposition}[theorem]{Proposition}
\newtheorem{definition}[theorem]{Definition}

\newtheorem{remark}[theorem]{Remark}

\addtolength{\headheight}{-0.2pt}

\newcommand{\ma}[1]{\ensuremath{\mathbb{#1}}}
 
\font\bb=msbm7 at 10 pt

\def \C {\hbox{\bb C}}
\def \Z {\hbox{\bb Z}}
\def \Q {\hbox{\bb Q}}
\def \N {\hbox{\bb N}}
\def \PP {\hbox{\bb P}}

\def \F {\hbox{\bb F}}
\def \R {\hbox{\bb R}}
\def \A {\hbox{\bb A}}

\def \d {\textbf{d}}

\def \T {\mathcal{T}}

\def \AA {\mathcal{A}}

\def \B {\mathcal{B}}

\def \H {\mathcal{H}}

\def \O {\mathcal{O}}
\def \K {\mathcal{K}}
\def \Q {\hbox{\bb Q}}
\def \I {\mbox{\bf I}}

\def \II {\mathcal{I}}
\def \del {\boldsymbol\delta}
\def \sgn {\mbox{\rm{sgn}}}

\newcommand{\1}{\ensuremath{\textrm{\bf 1}}}
\newcommand{\ee}{\ensuremath{\textrm{\bf e}}}
\newcommand{\dd}{\ensuremath{\textrm{\bf d}}}
\newcommand{\ii}{\ensuremath{\textrm{\bf i}}}
\newcommand{\jj}{\ensuremath{\textrm{\bf j}}}
\newcommand{\uu}{\ensuremath{\textrm{\bf u}}}

\newcommand{\nn}{\ensuremath{\textrm{\bf n}}}
\newcommand{\X}{\ensuremath{\textbf{X}}}
\def \x {\mbox{\textbf{x}}}

\newcommand{\Ima}{\ensuremath{\mbox{\rm{Im }}}}
\newcommand{\Gal}{\ensuremath{\mbox{\rm{Gal}}}}

\author{R\'egis Blache}
\address{\'Equipe LAMIA,
IUFM de la Guadeloupe}
\email{rblache@iufm.univ-ag.fr}

\title[Congruences for $L$-functions]{Congruences for $L$-functions of additive exponential sums}

\begin{document}

\begin{abstract}
We give a congruence for $L$-functions coming from affine additive exponential sums over a finite field. Precisely, we give a congruence for certain operators coming from Dwork's theory. This congruence is very similar to the congruence of Manin for the characteristic polynomial of the action of Frobenius on the Jacobian of a curve defined over a finite field.
\end{abstract}

\subjclass{11L,14H}
\keywords{Character sums, $L$-functions}

\maketitle

\section*{Introduction}

In a classical paper \cite{manin}, Manin gives a congruence for the characteristic polynomial of the Frobenius endomorphism on the jacobian of a curve $C$, defined over the finite field $k:=\F_{q}$, $q=p^m$, in terms of its Hasse-Witt matrix $A$. Let $g$ denote the genus of $C$, and $\tau$ the $p$-th power morphism. We transform slightly Manin's result in order to give a congruence for the numerator $L(C,T)$ of the zeta function of the curve $C$ (it is the reciprocal of the characteristic polynomial of Frobenius)
$$L(C,T)\equiv \det\left(\I_g-AA^\tau\cdots A^{\tau^{m-1}}T\right)\mod p.$$
This result gives a congruence modulo $p$ for the coefficients of the polynomial $L(C,T)$, along the horizontal slope of its Newton polygon.

Our aim here is to give a congruence similar to Manin's, valid for any $L$-function associated to additive exponential sums over affine space. 

For any $r\geq 1$, denote by $k_r$ the degree $r$ extension of the field $k$ inside a fixed algebraic closure $\overline{k}$. Choose a non trivial additive character $\psi$ of $\F_p$; using the trace, it extends to a character $\psi_{mr}$ for each $r\geq 1$. Fix a subset $D\subset \N^n$ which is not contained in any of the coordinate hyperplanes, and a $n$ variable polynomial $f\in k[\x]$ having its exponents in $D$. One can define the exponential sum $S_r(f)$ over the $k_r$ rational points of affine space $\A^n$. As usual, one defines a generating series from these sums when $r$ varies; this is the $L$-function associated to $f$ and $\psi$ which we denote by $L(\A^n,f;T)$ (we omit the character $\psi$ since it is fixed once and for all)
$$L(\A^n,f;T):=\exp\left(\sum_{r\geq 1} S_r(f)\frac{T^r}{r}\right).$$

It is known that this function is rational; actually it lies in $\Q_p(\zeta_p)(T)$, and has its coefficients in the ring $\Z_p[\zeta_p]$. We denote by $\pi$ the root in $\C_p$ of the polynomial $X^{p-1}+p$ such that $\psi(1)\equiv 1+\pi \mod \pi^2$. This is a generator of the maximal ideal of $\Z_p[\zeta_p]$. A congruence modulo $\pi$ for the $L$-function would be trivial, since all exponential sums have positive valuation. In other words all reciprocal roots and poles of the $L$-function have positive valuation. In \cite[Theorem 2.1]{bl}, we show that the $p$-density $\delta:=\delta_{p}(D)$ of the set $D$ is a lower bound for these ($q$-adic) valuations, and that it is tight, in the sense that there exists at least one polynomial in $\overline{k}[D]$ whose $L$-function has one reciprocal root or pole of exact valuation $\delta$. As a consequence, if we consider the $L$-function as an element in $\Z_p[\zeta_p][[T]]$, it lies in the subring
$$M_{\delta}:=\{\sum_{i\geq 0} a_iT^i,~a_i\in \Z_p[\zeta_p],~v_q(a_i)\geq \delta i\},$$
and we shall get a non trivial congruence if we consider it modulo the ideal
$$I_{\delta}:=\{\sum_{i\geq 0} a_iT^i,~a_i\in \Z_p[\zeta_p],~v_q(a_i)> \delta i\}.$$

This is our purpose; in order to write down the result, we need some notations. For any subset $I\subset\{1,\ldots,n\}$, denote by $D_I$ the subset of $D$ consisting of elements $\dd(d_1,\cdots,d_n)$ such that $d_j= 0$ for any $j\notin I$. Fix a polynomial $f(\x):=\sum_D c_{\dd} \x^{\dd} \in k[\x]$. Consider the Teichmüller liftings $\gamma_{\dd}$ of the $c_{\dd}$ in the unramified extension of degree $m$ of $\Q_p$, set $\Gamma:=(\gamma_{\dd})_{\dd\in D}$ and $\Gamma_I:=(\gamma_{\dd})_{\dd\in D_I}$. For each $I$, we can define its $p$-density $\delta_p(D_I)$ from \cite{bl}; in the following we define some new invariants associated to the set $D_I$ and the prime $p$. A subset of $\N_I$ which we call the $p$-minimal support, and a $N_I\times N_I$ matrix $M(\Gamma_I):=M_{D_I,p}(\Gamma_I)$ whose coefficients lie in $\Z_p[\Gamma_I]$. With this at hand, we have

\begin{theoreme} 
\label{Lcong}
The $L$-function defined from the exponential sums $S_r(f)$ satisfies the following congruence in $M_\delta$

$$L(\A^n,f;T)\equiv \prod \det\left(\I_{N_I}-\pi^{m(p-1)\delta}TM(\Gamma_I)^{\tau^{m-1}}\cdots M(\Gamma_I)\right)^{(-1)^{\#I+1}} \mod I_{\delta}$$
where the product is over those $I$ such that $\delta_p(D_I)+n-\#I=\delta_p(D)$. 

\end{theoreme}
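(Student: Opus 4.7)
The plan is to start from Dwork's $p$-adic analytic formula for the $L$-function. By Bombieri's inclusion–exclusion decomposition of the affine sum into sums over tori, one writes
\[
L(\A^n,f;T) \;=\; \prod_{I\subset\{1,\ldots,n\}} \det\bigl(\I-T\alpha_I^{\,m}\bigm|H_I\bigr)^{(-1)^{\#I+1}},
\]
where each $\alpha_I = \psi_0 \circ \Gamma_I$ is a completely continuous operator on a suitable $p$-adic Banach space $H_I$ of overconvergent series supported on the cone of $D_I$. The exponent $(-1)^{\#I+1}$ and the set $I$ range explain the shape of the right-hand side of the theorem; it therefore suffices to establish, for each $I$, a congruence
\[
\det\bigl(\I - T\alpha_I^{\,m}\bigr) \;\equiv\; \det\bigl(\I_{N_I} - \pi^{m(p-1)\delta_I}\,T\,M(\Gamma_I)^{\tau^{m-1}}\cdots M(\Gamma_I)\bigr) \pmod{I_{\delta_I}},
\]
with $\delta_I:=\delta_p(D_I)$, and then to observe that only those $I$ with $\delta_I + n-\#I = \delta$ contribute modulo $I_\delta$ (the others giving factors congruent to $1$), since the valuation estimate of \cite[Theorem 2.1]{bl} bounds the reciprocal roots from below by $\delta_I + n-\#I$.

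The second step is to write the matrix of $\alpha_I$ in the natural monomial basis indexed by the cone $\N_I$ generated by $D_I$. Using the estimates for the coefficients of Dwork's splitting function $\theta(\pi X) = \sum \theta_i (\pi X)^i$ with $v_p(\theta_i)\geq i(p-1)/p$, one obtains a $q$-adic lower bound for the valuation of the $(\uu,\vv)$-entry of $\alpha_I$ that is linear in $|\vv|$ with slope $\delta_I$. The monomials $\vv$ for which this bound is sharp constitute by definition the $p$-minimal support, of cardinality $N_I$, and the corresponding block, after normalisation by $\pi^{-(p-1)\delta_I}$, is exactly $M(\Gamma_I)$. All other entries contribute strictly above the slope $\delta_I$ in each $\vv$, hence disappear in the Fredholm determinant modulo $I_{\delta_I}$.

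The third step is to take the $m$-th iterate. The operator $\alpha_I$ is only semi-linear with respect to the Frobenius $\tau$ acting on the Teichmüller coefficients; its $m$-th power is linear and its matrix on the $p$-minimal support, modulo the same ideal, is the cyclic product $M(\Gamma_I)^{\tau^{m-1}}\cdots M(\Gamma_I)$ scaled by $\pi^{m(p-1)\delta_I}$. This yields the asserted congruence for the $I$-factor. Piecing the factors together over those $I$ with $\delta_I+n-\#I=\delta$, and absorbing the remaining ones into $I_\delta$, gives the theorem.

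The main obstacle is the second step: controlling rigorously that the passage to the finite $N_I\times N_I$ truncation commutes with the Fredholm determinant modulo $I_{\delta_I}$. Concretely, one must show that replacing the semi-infinite matrix of $\alpha_I$ by its block on the $p$-minimal support does not alter $\det(\I-T\alpha_I^{\,m})$ modulo $I_{\delta_I}$. This rests on a careful Newton-polygon argument for each column, using that every off-minimal column has its first non-zero entry lying strictly above the line of slope $\delta_I$, and on the fact that any product of entries in the expansion of the determinant that involves at least one such off-minimal column contributes a term in $I_{\delta_I}$. Once this truncation lemma is in place, the remainder of the proof is bookkeeping with Teichmüller liftings and the Frobenius $\tau$.
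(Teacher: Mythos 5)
Your overall strategy tracks the paper's: decompose the $L$-function into factors indexed by subsets $I$, show that each factor is congruent (modulo a suitable ideal) to the characteristic polynomial of a finite $N_I\times N_I$ block, use the factorisation $\alpha=\beta^m$ to produce the $\tau$-twisted product $M(\Gamma_I)^{\tau^{m-1}}\cdots M(\Gamma_I)$, and observe that only the $I$ with $\delta_p(D_I)+n-\#I=\delta$ survive modulo $I_\delta$. This matches the skeleton of Lemmas~\ref{exp}, \ref{DetDI}, \ref{supp2} and Proposition~\ref{congmanin}. Two issues, however, stand out.

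First, a relatively minor one: your opening decomposition is stated as an \emph{equality} $L(\A^n,f;T)=\prod_I\det(\I-T\alpha_I^m\,|\,H_I)^{(-1)^{\#I+1}}$. The correct formula (Lemma~\ref{exp}(iii)) carries a twist $q^{n-\#I}$ on $T$ and, more subtly, the operator $\del$, so that each $I$-factor actually appears as $\det(\I-q^{n-\#I}TA(\Gamma)_I)^{-(-\del)^{\#I}}$, i.e.\ as a whole alternating family of determinants with extra $q$-twists of different orders. Without the $q^{n-\#I}$ the factor for $I$ lies in $M_{\delta_p(D_I)}$ rather than $M_\delta$, and for $\#I<n$ it would not even belong to $M_\delta$, so your intermediate congruences would be formulated in the wrong ring. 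You clearly know about the $\delta_I+n-\#I$ bound, so this is likely a slip, but as written the factorisation is false.

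Second, and this is the real gap, the heart of the proof is the truncation lemma, which you identify as the main obstacle but do not resolve. Your proposed argument --- ``every off-minimal column has its first non-zero entry lying strictly above the line of slope $\delta_I$'' --- does not work. The density $\delta_p(D)$ bounds valuations of \emph{cyclic products} of entries of $B(\Gamma)$ (equivalently of $A(\Gamma)$) along periodic orbits, via the identification with solutions of the modular equations $\sum_D u_\dd\dd\equiv 0\pmod{p^r-1}$; it does \emph{not} bound individual off-diagonal entries, nor does it give a column-by-column Newton polygon estimate. The paper's argument is substantially different: one first decomposes each principal minor of the Fredholm series into \emph{cyclic minors} $M_\theta^{(m)}$ (Lemma~\ref{decdetcyc}), then shows (Lemma~\ref{supp1}) that $v(M_\theta^{(m)})\geq ml(p-1)\delta$ with strict inequality whenever $\Ima\theta\not\subset\Sigma_p(D)$, a statement whose proof runs entirely through the theory of minimal and irreducible solutions of Section~2 (in particular Lemma~\ref{suppmin}). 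That structure --- injections $\theta:\Z/l\Z\to(\N_{>0})^n$, the map $\varphi_U$ relating a solution $U\in E_{D,p}(ml)$ to its orbit, and the minimal support $\Sigma_p(D)$ --- is exactly what you need to justify the truncation, and it has no analogue in a column-wise argument. Without it, or a genuine substitute for it, the key congruence $\det(\I-TA)\equiv\det(\I_N-\pi^{m(p-1)\delta}TM(\Gamma)^{\tau^{m-1}}\cdots M(\Gamma))\pmod{I_\delta}$ remains unproved.
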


Note that as long as the density is a rational number, the number $m(p-1)\delta$ is not necessarily an integer and the number $\pi^{m(p-1)\delta}$ is not in $\Z_p[\zeta_p]$ in general; but the rational function on the right hand side has its coefficients in $\Z_p[\zeta_p]$.

Note also that in many cases (for instance when $D$ has an element with all its coordinates positive), there is no subset $I\subset\{1,\ldots,n\}$ such that $\delta_p(D_I)+n-\#I=\delta$. Then the right hand side of the congruence is a polynomial. 

\medskip

From the orthogonality relations on additive characters, the zeta functions of projective and affine varieties are expressed as $L$-functions. In this way one can recover some classical results about these zeta functions. In the case of a projective hypersurface $V$ having equation $F=0$ of degree $d$ in $\PP^n$, with $d\geq n$, Miller \cite[Corollaire 1]{mi} gives an expression of the matrix of the Cartier operator acting on the space $H^0(V,\Omega^{n-1})$ from the coefficients of $F$. One can show that if we set $f:=yF$, the matrix in Theorem \ref{Lcong} boils down to (the transpose of) Miller's one. There are also congruences modulo $p$ for the zeta function of a variety in \cite{ka2}; but these congruences are trivial when the Hodge number $h^{n-1,0}$ is zero, i.e. when the Newton polygon of the interesting part of the zeta function has no horizontal slope. The result above allows one to give non-trivial congruences in any case.

\medskip

We emphasize the case $n=1$. This case has been our starting point, since it has drawn much attention, in connection with Artin-Schreier curves, i.e. $p$-cyclic coverings of the projective line ramified exactly at one point. These curves have $p$-rank $0$, and the right-hand side of Manin's congruence is $1$ in this case. This question has been studied in \cite{sz3}; the method there is to compute the Verschiebung action on the first de Rham cohomology space of a curve by taking power series expansions at a rational point, and then use Katz's sharp slope estimate \cite{ka}; this gives the first vertex of the Newton polygon of the numerator of the zeta function when the characteristic $p$ is large compared with the degree. These calculations have been applied to the case $p=2$ in \cite{sz1,sz2}, where the first slope is given in many cases.

The congruence above, applied in the one variable case, extends and generalizes these results in the following way. If $f\in k[x]$ is a polynomial having its exponents in $D\subset \N$, and $C$ is the (Artin-Schreier) curve having equation $y^p-y=f$, one can express the numerator $L(C,T)$ of the zeta function of the curve $C$ as a product of $L$-functions \cite[p95]{bo}. We get the following congruence for the numerator (note it is the reciprocal of the characteristic polynomial of Frobenius action)

$$L(C,T)\equiv N_{\ma{Q}_p(\zeta_p)/\ma{Q}_p}\left(\det\left(\I_{N}-\pi^{m(p-1)\delta}TM(\Gamma)^{\tau^{m-1}}\cdots M(\Gamma)\right)\right) \mod I_{\delta}$$

We shall return to the question of Artin-Schreier curves in a forthcoming article.

\medskip

Our methods are very close to Dwork's original one. In section \ref{dwor}, we express the $L$-functions as alternating products of Fredholm determinants coming from $p$-adic completely continuous operators \cite{se}; this follows rather closely the method of Katz \cite{ka1} for the number of points of varieties. Then we give technical results about minimal solutions of certain modular equations along section \ref{irrsol}; there we define the building blocks for the matrices $M_{D,p}$. Finally we examine closely the coefficients of the Fredholm determinants in section \ref{secmain}, and link their principal parts to minimal solutions defined above. Putting these results together gives Theorem \ref{Lcong}, that we prove at the end of the paper.

\section{Dwork's trace formula}
\label{dwor}

In this section we briefly describe the tools that we shall use throughout the paper. Everything could be described in terms of rigid cohomology, but for sake of simplicity we shall adopt the point of view of Robba \cite{ro} (note that we do not use cohomology, since we work with nuclear matrices), which has the benefit to be explicit. Our aim in this section is to give different expressions for the $L$-functions we consider, in terms of some Fredholm determinants. This is rather classical, and the only new result (to our knowledge) is Lemma \ref{coeffssplit}, which gives an expression for the principal parts of the coefficients of some of Dwork's splitting functions, very close to the well known Stickelberger's congruence for Gauss sums.

\subsection{The splitting functions}

We denote by $\Q_p$ the field of $p$-adic numbers, and by $\K_m:=\Q_p(\zeta_{q-1})$ its 
(unique up to isomorphism) unramified extension of degree $m$. Let 
$\O_m=\Z_p[\zeta_{q-1}]$ be the valuation ring of $\K_m$; the elements of finite order in 
$\O_m^\times$ form a group $\T_m^\times$ of order $p^m-1$, and 
$\T_m:=\T_m^\times\cup\{0\}$ is the {\it Teichm\"uller} of $\K_m$. Note that 
it is the image of a section of reduction modulo $p$ from $\O_m$ to its 
residue field $\F_q$, called the {\it Teichm\"uller lift}. Let $\tau$ be the Frobenius; it is the 
generator of $\Gal(\K_m/\Q_p)$ which acts on $\T_m$ as the $p$th power 
map. Finally we denote by $\C_p$ a completion of a fixed algebraic 
closure $\overline{\Q}_p$ of $\Q_p$.

\medskip

Let $\pi \in \C_p$ be the root of the polynomial $X^{p-1}+p$ defined in the introduction. It is well known that $\Q_p(\pi)=\Q_p(\zeta_p)$ is a totally ramified extension of degree $p-1$ of $\Q_p$. We shall 
frequently use the valuation $v:=v_\pi$, normalized by $v_\pi(\pi)=1$, 
instead of the usual $p$-adic valuation $v_p$, or the $q$-adic valuation 
$v_q$. 

\medskip

We define the power series $\theta(X):=\exp(\pi X-\pi X^p)$; this is a {\it splitting function} in Dwork's terminology \cite[p55]{dw}. Its values at the points of $\T_1$ are $p$-th roots of unity; actually this function represents the additive character $\psi$. It is well known that 
$\theta$ converges for any $x$ in $\C_p$ such that 
$v_p(x)>-\frac{p-1}{p^2}$. We also define 
$$\theta_m(X):=\prod_{i=0}^{m-1} \theta(X^{p^i})=\exp(\pi X-\pi X^q):=\sum_{n\geq 0} \lambda_n^{(m)} X^n.$$

We need a precise estimate for the valuations of the coefficients of the series $\theta_m$. Let us introduce some notations.

\begin{definition}
For $n$ a non negative integer, we denote by $s_p(n)$ the {\rm $p$-weight} of $n$ : in other words, if $n=n_0+pn_1+\cdots+p^tn_t$ with $0\leq n_i\leq p-1$, we have $s_p(n)=n_0+\dots+n_t$.
Moreover we set $n!!:=n_0!\dots n_{t-1}!$.
\end{definition}

We give an expression for the principal parts of the coefficients of Dwork's splitting functions defined above (compare Stickelberger's theorem for Gauss sums \cite{st})

\begin{lemma}
\label{coeffssplit}
Notations are as in the definition above. In the ring $\Z_p[\zeta_p]$, we have the following congruences for the coefficients of the splitting function $\theta_m$
 $$\lambda^{(m)}_n \equiv \left\{ \begin{array} {rcl} \frac{\pi^{s_p(n)}}{n!!} \mod \pi^{s_p(n)+p-1}  & \mbox{\rm if} &  0\leq n\leq q-1;\\
0 \mod \pi^{s_p(n)+p-1} & \mbox{\rm if} & n\geq q. \\
\end{array}
\right.$$

\end{lemma}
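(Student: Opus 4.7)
The approach is to expand $\theta_m(X)=\exp(\pi X)\exp(-\pi X^q)$ as a double power series and read off
\[
\lambda_n^{(m)} \;=\; \sum_{\substack{b\geq 0\\ qb\leq n}} \frac{(-1)^b\,\pi^{n-(q-1)b}}{b!\,(n-qb)!}.
\]
Using Legendre's identity $v_p(k!) = (k-s_p(k))/(p-1)$ together with $\pi^{p-1}=-p$, one checks that the $b$-th summand has $\pi$-adic valuation exactly $s_p(b)+s_p(n-qb)$. Sub-additivity of $s_p$ (noting $s_p(qb)=s_p(b)$) then gives $v_\pi\geq s_p(n)$ for every term, and the excess $s_p(b)+s_p(n-qb)-s_p(n)$ equals $p-1$ times the number of carries when $qb$ is added to $n-qb$ in base~$p$.

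For Case~1 ($n\leq q-1$) only $b=0$ contributes, giving $\lambda_n^{(m)}=\pi^n/n!$. I would then rewrite $\pi^n = \pi^{s_p(n)}(-p)^{v_p(n!)}$ and invoke the classical Anton congruence $n!/p^{v_p(n!)}\equiv(-1)^{v_p(n!)}\prod_i n_i! \pmod p$, a standard refinement of Wilson's theorem, to obtain $\lambda_n^{(m)}\equiv \pi^{s_p(n)}/n!! \pmod{\pi^{s_p(n)+p-1}}$ after the sign factors cancel.

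For Case~2 ($n\geq q$), every term with at least one carry already lies in the ideal $(\pi^{s_p(n)+p-1})$ by the computation above, so it suffices to handle the ``no-carry'' terms---those $b$ with $s_p(b)+s_p(n-qb)=s_p(n)$. Such $b$ are precisely those whose base-$p$ digits satisfy $b_k\in\{0,\ldots,n_{m+k}\}$ for all $k\geq 0$, and applying Anton's congruence to both $b!$ and $(n-qb)!$ (the latter has digits $n_0,\ldots,n_{m-1},n_m-b_0,n_{m+1}-b_1,\ldots$) collapses the no-carry sum modulo $\pi^{s_p(n)+p-1}$ to
\[
\frac{\pi^{s_p(n)}}{n_0!\cdots n_{m-1}!}\;\prod_{k\geq 0}\;\sum_{b_k=0}^{n_{m+k}}\frac{(-1)^{b_k p^k}}{b_k!\,(n_{m+k}-b_k)!}.
\]
By the binomial theorem each inner sum equals $(1+(-1)^{p^k})^{n_{m+k}}/n_{m+k}!$. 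The hypothesis $n\geq q$ forces some $n_{m+k}\geq 1$; for such $k$ the factor is either zero (when $p$ is odd, or when $p=2$ and $k=0$) or a positive power of $2$ and hence divisible by $p=2$ (when $p=2$, $k\geq 1$). Either way the full product vanishes modulo $p$, proving the claim.

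The most delicate step is the algebraic bookkeeping in Case~2: after applying Anton's congruence to each of the three factorials $n!$, $b!$, $(n-qb)!$, one must verify that the accumulated sign factors $(-1)^b$, $(-1)^{v_p(n!)}$, $(-1)^{v_p(b!)}$, $(-1)^{v_p((n-qb)!)}$ collapse to leave exactly the clean product over base-$p$ digits displayed above. A minor subtlety is the prime $p=2$, for which $(-1)^{p^k}$ behaves differently at $k=0$ versus $k\geq 1$; this is absorbed by observing that the exceptional subcase still yields a factor divisible by~$2$.
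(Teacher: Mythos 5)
Your proof is correct and follows essentially the same route as the paper: expand $\exp(\pi X)\exp(-\pi X^q)$, use Legendre's formula and Anton's congruence to show the summand with index $b$ has $\pi$-valuation $s_p(b)+s_p(n-qb)$, discard the terms with a carry, and factor the surviving no-carry sum into inner binomial sums of the form $(1+(-1)^{p^k})^{n_{m+k}}/n_{m+k}!$ that vanish (or, when $p=2$ and $k\geq 1$, contribute a factor of $2$). The only cosmetic difference is that you phrase the carry condition via Kummer's theorem while the paper works with the congruence $s_p(n)\equiv s_p(r)+s_p(s)\pmod{p-1}$, and you package the alternating sums with the binomial theorem where the paper writes them out; the substance of the argument is identical.
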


\begin{proof}
Recall that $\theta_m(X)=\exp(\pi X-\pi X^q)$. From the well known expansion $\exp X=\sum_{n\geq 0} \frac{X^n}{n!}$, we get the expression
$$\lambda^{(m)}_n=\sum_{r,s,r+qs=n} (-1)^s \frac{\pi^{r+s}}{r!s!}.$$

Assume $0\leq n\leq q-1$; then we get $\lambda^{(m)}_n=\frac{\pi^{n}}{n!}$. From a result of Anton, we have the congruence 
$$n!\equiv (-p)^a n!! \mod p^{a+1},\quad a=\frac{n-s_p(n)}{p-1},$$
which gives the result for $0\leq n\leq q-1$ (recall that $\pi^{p-1}=-p$).

Assume $n\geq q$, and write $n=n_0+pn_1+\cdots+p^tn_t$, with $t\geq m$. First observe that, from above,
$$\frac{\pi^{r+s}}{r!s!}\equiv \frac{\pi^{s_p(r)+s_p(s)}}{r!! s!!} \mod \pi^{s_p(r)+s_p(s)+p-1}$$
From the expression $n=r+qs$, and since $s_p(q)=1$, we deduce $s_p(n)\leq s_p(r)+s_p(s)$, and $s_p(n)\equiv s_p(r)+s_p(s) \mod p-1$. The formula $s_p(n)= s_p(r)+s_p(s)$ holds if and only if $0\leq s_i \leq n_{m+i}$ for any $0\leq i\leq t-m$. Thus we get
$$\lambda^{(m)}_n\equiv \left(\sum_{s_0=0}^{n_m}\cdots \sum_{s_{t-m}=0}^{n_t} (-1)^s\frac{1}{r!!s!!}\right) \pi^{s_p(n)}\mod \pi^{s_p(n)+p-1}.$$

Assume $p$ is odd. From the identities $r!!=n_0!!\dots n_{m-1}!!(n_m-s_0)!!\cdots (n_t-s_{m-t})!!$ and $(-1)^s=(-1)^{s_0+\cdots+s_{t-m}}$, one can rewrite the multiple sum as
$$\frac{1}{n!!}\sum_{s_0=0}^{n_m}\frac{(-1)^{s_0}n_m!}{(n_m-s_0)!s_0!}\cdots \sum_{s_{t-m}=0}^{n_t} \frac{(-1)^{s_{t-m}}n_t!}{(n_t-s_{t-m})!s_{t-m}!}.$$
Since each sum is an alternate sum of binomial coefficients, it is zero, and we get the result.

\medskip
 
When $p=2$, we just have to remove the signs; now the sums of binomial coefficients are powers of $2$, and we get the result.
\end{proof}

\subsection{Dwork's trace formula}

Consider the space $\H^\dagger(\AA)$ of overconvergent functions over the unit ball $\AA=\{(x_1,\ldots,x_n) \in \C_p^n,~\max_i |x_i|\leq 1\}$ of $\C_p^n$, with basis $\B\{\X^{\ii},~\ii\in \N^n\}$. Let $D$ denote a finite subset of $\N^n$, and set $f(\x):=\sum_{\dd\in D} c_{\dd}\x^{\dd}$, $c_{\dd}\in k$. Denote by $\gamma_{\dd}$ the Teichmüller lift of $c_{\dd}$. We set $\Gamma:=(\gamma_{\dd})_{\dd\in D}$ in the following.

From Dwork's splitting functions, we can define two elements in $\H^\dagger(\AA)$ in the following way
$$F_1(\Gamma,\X)=\prod_{\dd\in D} \theta(\gamma_{\dd}\X^{\dd}),$$
$$F_m(\Gamma,\X)=\prod_{i=0}^{m-1} F_1(\Gamma,\X^{p^i})^{\tau^i}=\prod_{\dd\in D} \theta_m(\gamma_{\dd}\X^{\dd}),$$

We also define an operator $\Psi$ over $\H^\dagger(\AA)$ by $\Psi\left( \sum_{\ii\in \ma{N}^n} b_{\ii}\X^{\ii}\right)=\sum_{\ii\in \ma{N}^n} b_{p\ii}\X^{\ii}$ (it is Dwork's application in Robba's terminology). We consider the nuclear operator $\alpha(\Gamma)$ defined over $\H^\dagger(\AA)$ by $\alpha(\Gamma):=\Psi^m\circ F_m(\Gamma,\X)$, where a series (here $F_m(\Gamma,\X)$) denotes multiplication by this series on $\H^\dagger(\AA)$. We can factorize $\alpha(\Gamma)$ in terms of the semi-linear (actually $\Q_p(\pi)$-linear, but not $\Q_q(\pi)$-linear) operator $\beta(\Gamma):=\tau^{-1}\circ \Psi \circ F_1(\Gamma,\X)$, simply as $\alpha(\Gamma)=\beta(\Gamma)^m$.

Finally, in order to give an expression for the $L$-function, we define a deRham type complex $(\Omega^\bullet,d_\bullet)$. Set $d\X_I=dX_{i_1}\wedge\cdots\wedge dX_{i_k}$ for $I=\{i_1,\cdots,i_k\}$ with $i_1<\cdots<i_k$, and for any $1\leq i\leq n$, consider the differential on $\H^\dagger(\AA)$ defined by $D_i=\frac{\partial}{\partial X_i}+H_i$, where we have set $H_i= \frac{1}{F_m}\frac{\partial F_m}{\partial X_i}$. Then the complex is defined by 
$$\Omega^k=\bigoplus_{\stackrel{I\subset \{1,\ldots,n\}}{\# I=k}} \H^\dagger(\AA)d\X_I$$ 
for any $0\leq k\leq n$, with boundary operator $d_k:\Omega^k\mapsto \Omega^{k+1}$ defined by 
$$d_k(f(\X)d\X_I)=(\sum_{i=1}^n D_i(f)dX_i)\wedge d\X_I.$$
We extend the operator $\alpha(\Gamma)$ to an operator $\alpha(\Gamma)_\bullet$ of this complex, setting $$\alpha(\Gamma)_k(f(\X)d\X_I):=q^{n-k}\frac{1}{\X_I}\alpha(\Gamma)(\X_If(\X))d\X_I,$$ 
where $\X_I$ denotes the monomial $X_{i_1}\dotsm X_{i_k}$. Moreover the operators $\alpha(\Gamma)_k$ are all nuclear operators, thus they have well-defined trace and Fredholm determinant \cite{se}.

As an application of Dwork's trace formula, we get the following expression for the $L$-function
\begin{equation}
\label{dwtr}
L(\A^n,f;T)=\prod_{k=0}^n \det(\I-T\alpha(\Gamma)_k|\Omega^k)^{(-1)^{k+1}}.
\end{equation}

\subsection{Decomposition in Fredholm determinants}
\label{matrix}

From now on we consider the matrix $A(\Gamma)$ ({\it resp.} $B(\Gamma)$) of the operator $\alpha(\Gamma)$ ({\it resp.} $\beta(\Gamma)$) with respect to the basis $\B$. It is an easy calculation to check that if we set $F_m(\Gamma,\X):= \sum_{\ii\in \ma{N}^n} f^{(m)}_{\ii}(\Gamma)\X^{\ii}$ ({\it resp.} $F_1(\Gamma,\X):= \sum_{\ii\in \ma{N}^n} f^{(1)}_{\ii}(\Gamma)\X^{\ii}$), then the $(\ii,\jj)$ coefficient of $A(\Gamma)$ ({\it resp.} $B(\Gamma)$) is $f^{(m)}_{q\ii-\jj}(\Gamma)$ ({\it resp.} $f^{(1)}_{p\ii-\jj}(\Gamma)$).

\begin{definition}
\label{defNI}
Let $I$ be a subset of $\{1,\ldots,n\}$, possibly empty; in the following, we denote by $|I|$ its cardinality. 
\begin{itemize}
	\item[(i)] For any $\ii(i_1,\ldots,i_n)$ in $\N^n$, define its \emph{support} $[\ii]$ as $\{k,i_k\neq 0\}$.
	\item[(ii)] We define the matrices $A(\Gamma)_I$, $A(\Gamma)^I$ and $B(\Gamma)_I$ by 
$$A(\Gamma)_I:=(f^{(m)}_{q\ii-\jj}(\Gamma))_{[\ii],[\jj]=I},~A(\Gamma)^I:=(f^{(m)}_{q\ii-\jj}(\Gamma))_{[\ii],[\jj]\supset I},~B(\Gamma)_I:=(f^{(1)}_{p\ii-\jj}(\Gamma))_{[\ii],[\jj]=I}.$$
	\item[(iii)] Define $\N_I$ as $\{\ii\in \N^n,~[\ii]\subset I\}$, and $D_I=D\cap \N_I$.

	\item[(iv)] Finally, define the operator $\del$ acting on formal power series with coefficients in $\C_p$ and constant coefficient equal to $1$ by $g^{\del}(t):=\frac{g(t)}{g(qt)}$.
\end{itemize}
\end{definition} 

\begin{remark}
Note that for $A(\Gamma)$ a nuclear matrix, every matrix $A(\Gamma)_I$ or $A(\Gamma)^I$ is also nuclear. 

Note also that $A(\Gamma)_\emptyset$ is the $1\times 1$ matrix having coefficient $1$, and $A(\Gamma)^\emptyset=A(\Gamma)$.

Finally, we have the relation $A(\Gamma)=B(\Gamma)^{\tau^{m-1}}B(\Gamma)^{\tau^{m-2}}\ldots B(\Gamma)$ from the factorisation $\alpha(\Gamma)=\beta(\Gamma)^m$ of the operator $\alpha(\Gamma)$ in terms of the semi-linear operator $\beta(\Gamma)$, 
\end{remark}

Let us give an expression of the $L$-function from the Fredholm determinants of the matrices we have just defined

\begin{lemma} 
\label{exp}
Let $I,J$ denote subsets of $\{1,\ldots,n\}$, possibly empty. We have the following expressions 
\begin{itemize}
	\item[(i)] $\det(\I-TA(\Gamma)^I)=\prod_{J\supset I}\det(\I-TA(\Gamma)_J)$;
	\item[(ii)] $\det(Id-T\alpha(\Gamma)_k|\Omega^k)=\prod_{\# I=k }\det(\I-q^{n-k}TA(\Gamma)^I)$;
	\item[(iii)]	$L(\A^n,f;T)=\prod_J \det(\I-q^{n-|J|}TA(\Gamma)_J)^{-(-\del)^{|J|}}$, where the product is over all subsets of $\{1,\ldots,n\}$, including $\emptyset$;
	\item[(iv)] the matrix factorisation $A(\Gamma)_J=B(\Gamma)_J^{\tau^{m-1}}B(\Gamma)_J^{\tau^{m-2}}\ldots B(\Gamma)_J$ remains true;
	\item[(v)] if $D_I\subset \N_J$, for some $J\subsetneq I$, then $A(\Gamma)_I$ is the zero matrix. 
\end{itemize}
\end{lemma}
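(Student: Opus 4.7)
The proof hinges on a single support-triangularity observation: since $F_m(\Gamma,\X)=\sum_{\ii}f^{(m)}_{\ii}(\Gamma)\X^{\ii}$ has all exponents in $\N^n$, the entry $f^{(m)}_{q\ii-\jj}$ of $A(\Gamma)$ is nonzero only if $q\ii-\jj\in\N^n$, forcing $[\jj]\subset[\ii]$; the same holds for $B(\Gamma)$ via $F_1$. Consequently, grouping rows and columns by support makes $A(\Gamma)$ (resp.\ $B(\Gamma)$) block upper triangular with respect to the reverse-inclusion order on subsets, with diagonal blocks $A(\Gamma)_J$ (resp.\ $B(\Gamma)_J$). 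Part (i) is immediate from this structure applied to the principal submatrix $A(\Gamma)^I$. Part (iv) is also immediate: every factor in $B(\Gamma)^{\tau^{m-1}}\cdots B(\Gamma)$ preserves the support filtration (Frobenius acts on coefficients without shifting monomial exponents), so the diagonal block at support $J$ of the product equals the product of the diagonal blocks. For (v), the same analysis shows that a nonzero entry $f^{(m)}_{q\ii-\jj}$ of $A(\Gamma)_I$ (with $[\ii]=[\jj]=I$) can only arise from an expansion $q\ii-\jj=\sum_{\dd\in D_I}n_\dd\dd$; under $D_I\subset\N_J$ this sum has support in $J$, and comparing with the support conditions $[\ii]=[\jj]=I$ yields the vanishing $A(\Gamma)_I=0$.

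For (ii), $\alpha(\Gamma)_k$ stabilizes each summand $\H^\dagger(\AA)d\X_I$ of $\Omega^k$, so its Fredholm determinant splits as a product over $I$ with $\#I=k$. For each such $I$, the map $\phi:fd\X_I\mapsto \X_I f$ identifies $\H^\dagger(\AA)d\X_I$ with the subspace $\H^I\subset\H^\dagger(\AA)$ spanned by $\{\X^{\ii}:[\ii]\supset I\}$. Support-triangularity shows $\H^I$ is $\alpha(\Gamma)$-stable, and a direct calculation gives $\phi\circ\alpha(\Gamma)_k\circ\phi^{-1}=q^{n-k}\alpha(\Gamma)|_{\H^I}$; since the matrix of $\alpha(\Gamma)|_{\H^I}$ in the basis $\{\X^{\ii}\}_{[\ii]\supset I}$ is by definition $A(\Gamma)^I$, the stated factorization follows.

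Part (iii) is obtained by substituting (ii) into Dwork's trace formula \ref{dwtr} and then (i) into each resulting factor, yielding
$$L(\A^n,f;T)=\prod_{k=0}^n\prod_{\#I=k}\prod_{J\supset I}\det(\I-q^{n-k}TA(\Gamma)_J)^{(-1)^{k+1}}.$$
For each $J$ and each $0\le j\le|J|$, there are $\binom{|J|}{j}$ subsets $I\subset J$ with $|I|=|J|-j$, contributing factors $\det(\I-q^{n-|J|+j}TA(\Gamma)_J)$ with sign $(-1)^{|J|-j+1}$. Since $\del$ acts on $\log g$ by $\log g(T)-\log g(qT)$, iteration yields the identity $g^{\del^\ell}(T)=\prod_{j=0}^\ell g(q^jT)^{(-1)^j\binom{\ell}{j}}$; applying $g\mapsto g^{-(-\del)^{|J|}}$ to $g(T)=\det(\I-q^{n-|J|}TA(\Gamma)_J)$ reproduces precisely the collection of factors above, which gives (iii). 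The combinatorial reorganization in (iii) matching the two expressions is the main bookkeeping step, while (i), (iv), (v) follow formally from support-triangularity, and (ii) from the explicit conjugation $\phi$.
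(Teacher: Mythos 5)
Your arguments for (i)--(iv) are correct and follow essentially the same route as the paper: the observation that $f^{(m)}_{q\ii-\jj}\neq 0$ forces $q\ii-\jj\in\N^n$ and hence $[\jj]\subset[\ii]$ gives the block-triangular structure behind (i) and (iv); the shift $f\,d\X_I\mapsto\X_I f$ realizes the restriction of $\alpha(\Gamma)_k$ as $q^{n-k}A(\Gamma)^I$ for (ii), just as the paper does by computing matrix entries after the change of variables $\ii\mapsto\ii+\1_I$; and the binomial bookkeeping in (iii) matches.

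For (v), however, the final ``comparing with the support conditions \ldots yields the vanishing'' is a non sequitur, and the stated conclusion $A(\Gamma)_I=0$ is in fact false. That $q\ii-\jj=\sum_{\dd\in D_I}u_{\dd}\dd$ lies in $\N_J$ with $J\subsetneq I$ does not contradict $[\ii]=[\jj]=I$; it only forces $j_k=qi_k$ for each $k\in I\setminus J$, which is perfectly possible. Concretely, take $n=3$, $D=\{(1,0,0),(0,1,1),(0,0,1)\}$, $I=\{1,2\}$, $J=\{1\}$: then $D_I=\{(1,0,0)\}\subset\N_J$, yet the entry of $A(\Gamma)_I$ at $(\ii,\jj)=((1,1,0),(1,q,0))$ equals $\lambda^{(m)}_{q-1}\gamma_{(1,0,0)}^{q-1}\neq 0$. (The paper's own proof of (v) has the same defect, and even writes $D$ in place of $D_I$.) What your computation does establish, and what the later arguments actually use, is that $\det(\I-TA(\Gamma)_I)=1$: any cyclic product of entries of $A(\Gamma)_I$ would require $j_k=qi_k$ at every step for some fixed $k\in I\setminus J$, and summing around the cycle gives $(q-1)\sum_u i^{(u)}_k=0$, impossible since $i^{(u)}_k\geq 1$. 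You should replace ``$A(\Gamma)_I$ is the zero matrix'' by this Fredholm-determinant statement and supply the cycle-sum argument.
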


\begin{proof}
On one hand, the degree $t$ coefficient of the Fredholm determinant $\det(\I-TA(\Gamma)^I)$ is the sum of the terms
$$M_\sigma=\sgn(\sigma)\prod_{u=1}^t f^{(m)}_{q\ii_{u}-\sigma(\ii_{u})}(\Gamma)$$
when $\ii_1<\ldots<\ii_t$ (with respect to lexicographic order) runs over elements in $\N^n$ whose support contains $I$ and $\sigma$ over the symmetric group on $\{\ii_1,\ldots,\ii_t\}$.

On the other hand, the degree $t$ coefficient of the product $\prod_{I\subset J}\det(\I-TA(\Gamma)_J)$ is the sum of the terms above with $[\sigma(\ii_{u})]=[\ii_u]$ for any $\ii_u$. Reordering the factors, the term $M_\sigma$ can be written as a product of terms of the form  $M=\prod_{v=0}^{l_i-1} f^{(m)}_{q\sigma^v(\ii_{u})-\sigma^{v+1}(\ii_{u})}(\Gamma)$, corresponding to the disjoint cycles whose product is $\sigma$.

Remark that we have $f^{(m)}_{q\ii-\jj}(\Gamma)=0$ when $[\jj]\nsubseteq[\ii]$. In order for the term $M$ above to be non-zero, we must have $[\ii_{u}]=[\sigma^{l_i}(\ii_{u})]\subseteq\ldots \subseteq [\ii_{u}]$, and the permutation $\sigma$ is a product of cycles preserving supports. This is the first assertion.

We turn to assertion {\it ii/}. It is sufficient to show that for any $I\subset \{1,\ldots,n\}$, $\# I=k$, the restriction of $\alpha(\Gamma)_k$ to $\H^\dagger(\AA)d\X_I$ has matrix $q^{n-k}A(\Gamma)^I$ with respect to some basis. We denote by $\1_I$ the vector in $\{0,1\}^n$ having support $I$. Consider the basis $\{\X^{\ii}d\X_I,~\ii\in \N^n\}$ of $\H^\dagger(\AA)d\X_I$; from the definition of $\alpha(\Gamma)_k$, we have 
$$\alpha(\Gamma)_k(\X^{\jj}d\X_I)=q^{n-k}\sum_{\ii\in \ma{N}^n} f^{(m)}_{q(\ii+\1_I)-(\jj+\1_I)}(\Gamma)\X^{\ii}d\X_I,$$
and the matrix of the restriction of $\alpha(\Gamma)_k$ to $\H^\dagger(\AA)d\X_I$ with respect to the above basis is $A(\Gamma)_I':=(q^{n-k}f^{(m)}_{q(\ii+\1_I)-(\jj+\1_I)}(\Gamma))_{\ii,\jj\in \N^n}$. Now the map $\ii\mapsto \ii+\1_I$ is a bijection from $\N^n$ to the set $\{\ii\in \N^n,~I\subset [\ii]\}$, and we get $A(\Gamma)_I'=q^{n-k}A(\Gamma)^I$.

The third assertion is a consequence of the first two, and the expression (\ref{dwtr}) of the $L$-function; using (ii), then (i), we get the expressions
\begin{eqnarray}
L(\A^n,f;T) & = & \prod_{k=0}^n \prod_{\# I=k }\det(\I-q^{n-k}TA(\Gamma)^I)^{(-1)^{k+1}}\\
            & = & \prod_{k=0}^n \prod_{\# I=k } \prod_{J\supset I}\det(\I-q^{n-k}TA(\Gamma)_J)^{(-1)^{k+1}} \\
       \end{eqnarray}
We can exchange the products, in order to begin with the product over $J$; the factor $\det(\I-q^{n-k}TA(\Gamma)_J)^{(-1)^{k+1}}$ appears once for each subset of $J$ having cardinality $k$, i.e. $\binom{\# J}{k}$ times. Now assertion (iii) follows from the expression
$$f^{(-\del)^k}(t)=\prod_{i=0}^{k} f(q^it)^{(-1)^i\binom{k}{i}}.$$

The fourth assertion follows from the matrix factorisation of $A(\Gamma)$; any coefficient $a_{\ii\jj}$ with $[\ii]=[\jj]=J$ can be written as a sum of terms such as 
$$(f_{p\ii-\ii_1}^{(1)}(\Gamma))^{\tau^{m-1}}\cdots f^{(1)}_{p\ii_{m-1}-\jj}(\Gamma).$$
In order for this last term to be non zero, we must have $[\jj]\subseteq[\ii_{m-1}]\subseteq\ldots \subseteq [\ii_{1}]\subseteq [\ii]$ as above. Thus we get $[\ii]=[\ii_1]=\ldots = [\ii_{m-1}]= [\jj]$, and this is what we wanted to show.

Finally, using the definition of the series $F_m$, we see that $f_{\ii}^{(m)}(\Gamma)$ is a sum of terms of the form $\prod_{\dd\in D} \gamma_{\dd}u_{\dd} \lambda_{u_d}^{(m)}$, for some integers $u_{\dd}\geq 0$ such that $\sum_D \dd u_{\dd}=\ii$. If we have $[\ii]=I$, and $D\subset \N_J$ for some $J$ strictly contained in $I$, this last equality is impossible, and we have $f_{\ii}^{(m)}(\Gamma)=0$. From the description of the coefficients of $A(\Gamma)_I$, this ends the proof.
\end{proof}

\section{Minimal solutions, their supports and their digits}
\label{irrsol}

In this section, we first recall some facts and notations about the sets of solutions of certain modular equations, and some of their properties. The ideas come from work of Moreno, Kumar, Castro and Shum \cite{mm}, and have been developed further in \cite{bl}. The reader interested in more details and the proofs can refer to this last paper. The new feature here is the introduction of irreducible solutions, the minimal support, and the description of minimal solutions with given support.

\subsection{The density, and preliminary results} Most of the material presented here comes from \cite{bl}; for this reason, the proofs are omitted when they already appear in this paper.

\begin{definition}
Let $D$ be a finite subset of $\N^n$, and $r$ denote a positive integer. We assume that the set $D$ is not contained in any of the  coordinate hyperplanes of $\R^n$. Recall that $s_p$ denotes the $p$-weight.

For any $\ii\in \Z^n$, the notation $\ii>0$ means that all coordinates of $\ii$ are positive.
 
We define $E_{D,p}(r)$ as the set of $\# D$-tuples $U=(u_{\dd})_{\dd\in D}\in \{0,\dots,p^r-1\}^{\# D}$ that are solutions of
$$\sum_D u_{\dd} \dd \equiv 0~[p^r-1],~\sum_D u_{\dd} \dd>0.$$
For any $U \in E_{D,p}(r)$, we define its {\rm $p$-weight} as the integer $s_p(U)= \sum_{\dd\in D} s_p(u_{\dd})$ and the {\rm length of $U$} as $\ell(U)=r$.
Finally we set $s_{D,p}(r):=\min_{U\in E_{D,p}(r)} s_p(U)$.
\end{definition}  

Moreno et al. \cite{mm} introduce the set $E_{D,p}(r)$ in order to give a lower bound for the $\pi$-adic valuations of exponential sums over $\F_{p^r}$ associated to polynomials with their exponents in $D$ and coefficients in this field. Actually they show that a lower bound for these valuations is $s_{D,p}(r)$, and that this bound is attained.

In order to study the valuation of the coefficients of the $L$-function, we have to make $r$ vary; in \cite[Proposition 1.1]{bl}, we proved the following 

\begin{proposition}
\label{princ}
The set $\left\{\frac{s_{D,p}(r)}{r}\right\}_{r\geq 1}$ has a minimum.
\end{proposition}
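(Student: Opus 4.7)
The strategy is a two-level reduction: first, a ``repetition'' construction gives multiplicative sub-additivity for $s_{D,p}$; second, a refinement by a finite ``shape'' parameter upgrades this to additive sub-additivity via Fekete; third, a structural finite-generation argument on the resulting sub-semigroup of $\N^2$ forces the infimum to be realised.

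\textit{Step 1: $k$-fold repetition.} Given $U = (u_\dd) \in E_{D,p}(r)$ and an integer $k \geq 1$, define $U^{(k)}$ by
$$u^{(k)}_\dd := u_\dd\bigl(1 + p^r + p^{2r} + \cdots + p^{(k-1)r}\bigr).$$
The base-$p$ expansion of $u^{(k)}_\dd$ consists of the digits of $u_\dd$ written $k$ times in a row, so $s_p(u^{(k)}_\dd) = k\, s_p(u_\dd)$; and if $\sum_\dd u_\dd \dd = \lambda(p^r-1)$ then $\sum_\dd u^{(k)}_\dd \dd = \lambda(p^{kr}-1)$, so $U^{(k)} \in E_{D,p}(kr)$. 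Hence $s_{D,p}(kr) \leq k\, s_{D,p}(r)$, i.e.\ the sequence $s_{D,p}(kr)/(kr)$ is non-increasing in $k$ for each fixed $r$.

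\textit{Step 2: refinement by shape.} Attach to $U \in E_{D,p}(r)$ its shape $\lambda_U \in \N^n$ defined by $\sum_\dd u_\dd\dd = \lambda_U(p^r-1)$. From $u_\dd \leq p^r - 1$ one has $(\lambda_U)_j \leq \#D\cdot\max_\dd \dd_j$, so $\lambda_U$ ranges over a fixed finite subset $\Lambda \subset \N^n$ independent of $r$. Put $s_\lambda(r) := \min\{s_p(U) : U\in E_{D,p}(r),\ \lambda_U = \lambda\}$. The ``concatenation'' $\tilde u_\dd := u_\dd + p^r u'_\dd$ of two shape-$\lambda$ solutions of lengths $r, r'$ is again a shape-$\lambda$ solution of length $r+r'$ with weight $s_p(U)+s_p(U')$, which gives $s_\lambda(r+r') \leq s_\lambda(r) + s_\lambda(r')$. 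By Fekete, $s_\lambda(r)/r \to \delta_\lambda := \inf_r s_\lambda(r)/r$, and $\inf_r s_{D,p}(r)/r = \min_{\lambda \in \Lambda}\delta_\lambda$.

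\textit{Step 3: attainment.} Fix $\lambda$ and consider the sub-semigroup $S_\lambda := \{(r, s_p(U)) : U\text{ has shape }\lambda,\text{ length }r\} \subset \N^2$. By Steps 1--2 it is closed both under componentwise addition and under multiplication by positive integers, so its slope set $T_\lambda := \{s/r : (r,s) \in S_\lambda\}$ is closed under rational convex combinations. The structural claim is that $S_\lambda$ is precisely the set of integer points of a rational polyhedral cone in $\N^2$: shape-$\lambda$ solutions are parametrised, at the level of base-$p$ digits, by non-negative integer solutions of a system of linear constraints with integer coefficients, and applying a Gordan/Dickson-type argument to a suitable parametrisation yields finitely many ``extremal'' primitive pairs $(r_i, s_i)$ whose rational convex combinations exhaust $T_\lambda$. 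Therefore $\delta_\lambda = \min_i s_i/r_i$ is rational and attained by some $(r_i, s_i) \in S_\lambda$; taking the minimum over the finite set $\Lambda$ gives a minimum of $\{s_{D,p}(r)/r\}_{r\geq 1}$.

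\textit{Main obstacle.} The delicate step is the attainment in Step 3: an integer-valued sub-additive sequence can converge to an irrational limit that is never attained at any finite index, so Fekete alone is not enough. The additional ingredient is the scaling invariance $(r,s)\mapsto(kr,ks)$ of $S_\lambda$ combined with the Dickson/Gordan finiteness, which together force the realised slopes to form a finitely generated rational convex object with attained minimum. Turning the informal ``integer points of a rational polyhedral cone'' picture into a clean combinatorial argument on the digit representation of shape-$\lambda$ solutions is where the technical weight of the proof lies.
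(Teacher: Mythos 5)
Your Steps 1 and 2 are correct and useful: the $k$-fold repetition gives $s_{D,p}(kr)\le k\,s_{D,p}(r)$, the shape parameter ranges over a finite set, concatenation gives subadditivity of $s_\lambda$, and you are right that Fekete only delivers convergence of $s_\lambda(r)/r$, not attainment. You have therefore correctly located the obstacle. But Step~3, which is the whole point, is not a proof. The assertion that $S_\lambda$ is the set of integer points of a rational polyhedral cone is unjustified, and the Gordan/Dickson invocation does not apply: Gordan's lemma is about \emph{saturated} subsemigroups (integer points of a cone), which is exactly what you would be trying to prove, and an arbitrary additive subsemigroup of $\N^2$ can perfectly well have $\inf\{s/r\}$ irrational and unattained (e.g.\ $\{(r,s):s/r>\alpha\}$ for irrational $\alpha$ is a subsemigroup with unattained infimum). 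So closure of $T_\lambda$ under rational convex combinations, which you do establish, proves nothing by itself.

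The ingredient you are missing is that the same finiteness you already proved for the shape $\lambda_U=\varphi_U(0)$ holds for \emph{every} intermediate carry $\varphi_U(i)$, $i\in\Z/r\Z$: since the cyclic shift $\delta_r$ preserves $E_{D,p}(r)$ and $\varphi_{\delta_r^k U}(0)=\varphi_U(k)$, each $\varphi_U(i)$ lies in the same finite set $\Lambda$. Now if $r>|\Lambda|$, pigeonhole forces $\varphi_U(t_1)=\varphi_U(t_2)$ for some $0\le t_1<t_2<r$, and (after shifting so that $t_1=0$) one can cut $U$ at position $t:=t_2$: writing $u_\dd=p^{r-t}w_\dd+v_\dd$ produces $V\in E_{D,p}(r-t)$ and $W\in E_{D,p}(t)$ with $s_p(U)=s_p(V)+s_p(W)$, so $\delta(U)$ is a convex combination of $\delta(V)$ and $\delta(W)$ and at least one of these is $\le\delta(U)$. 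Iterating until $\varphi$ is injective, the infimum of $\{s_{D,p}(r)/r\}$ is attained at some $r\le|\Lambda|$, hence it is a minimum. This is exactly the splitting mechanism the paper later reuses in the proof of Lemma~\ref{suppmin} and refers to as ``the reduction process in the proof of Proposition~1.1''; recast as a minimum-mean-cycle problem on the finite carry graph it is the clean form of the ``finitely many extremal pairs'' picture you were reaching for, but it is a pigeonhole-plus-splitting argument, not a Gordan-type finiteness statement.
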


This result allows the definition of $p$-density; this invariant is particularly important here. It is a sharp lower bound for the valuations of the reciprocal roots and poles of the $L$-functions \cite[Theorem 3.2]{bl}.

\begin{definition}
Let $D$ be a finite subset of $\N^n$, and $p$ be a prime.
\begin{itemize}
	\item[(i)] Assume that $D$ is not contained in any of the  coordinate hyperplanes of $\R^n$. The {\rm $p$-density of the set $D$} is the rational number
$$\delta_p(D):= \frac{1}{p-1}\min_{r\geq 1}\left\{\frac{s_{D,p}(r)}{r}\right\}.$$
	\item[(ii)] Assume that $D$ is contained in some of the  coordinate hyperplanes of $\R^n$; we set $\delta_p(D):=\infty$
\item[(iii)] The density of a solution $U\in E_{D,p}(r)$ is $\delta(U):=\frac{s_p(U)}{(p-1)r}$. The element $U$ is {\rm minimal} when $\delta(U)=\delta_p(D)$.
\end{itemize}
\end{definition}

We shall need some auxiliary results in the following

\begin{lemma}
\label{qmin}
Let $(u_{\dd})_D$ be nonnegative integers such that $\sum_D u_{\dd} \dd \equiv 0~[p^r-1]$ and the sum $\sum_D u_{\dd} \dd$ has all its coordinates positive. Then we have the inequality $\sum_D s_p(u_{\dd})\geq s_{D,p}(r)$.
\end{lemma}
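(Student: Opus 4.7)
The plan is to reduce the general case to the definitional case $u_{\dd}\in\{0,\ldots,p^r-1\}$ by an induction on $N:=\max_{\dd}u_{\dd}$. If $N\leq p^r-1$, the tuple $(u_{\dd})$ already lies in $E_{D,p}(r)$, so the bound $\sum_D s_p(u_{\dd})\geq s_{D,p}(r)$ is immediate from the definition. Otherwise, I would produce a new tuple $(t_{\dd})$ satisfying the same hypotheses, with strictly smaller maximum and no larger $p$-weight, and then invoke the induction hypothesis.

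To perform the reduction, I would write $u_{\dd}=a_{\dd}+p^r b_{\dd}$ with $0\leq a_{\dd}\leq p^r-1$ and $b_{\dd}\geq 0$, and set $t_{\dd}:=a_{\dd}+b_{\dd}$. The congruence survives because $p^r\equiv 1~[p^r-1]$, so $t_{\dd}\equiv u_{\dd}~[p^r-1]$ and hence $\sum_D t_{\dd}\dd\equiv 0~[p^r-1]$. The weight inequality $\sum_D s_p(t_{\dd})\leq \sum_D s_p(u_{\dd})$ comes from the disjointness of the base-$p$ digits of $a_{\dd}$ and $p^r b_{\dd}$ (which yields $s_p(u_{\dd})=s_p(a_{\dd})+s_p(b_{\dd})$) together with the standard subadditivity $s_p(a_{\dd}+b_{\dd})\leq s_p(a_{\dd})+s_p(b_{\dd})$ coming from carries. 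The strict decrease of the maximum follows from the observation that for every $\dd$ with $b_{\dd}\geq 1$ (at least one exists, since $N\geq p^r$) we have $u_{\dd}-t_{\dd}=(p^r-1)b_{\dd}\geq 1$, while for the $\dd$ with $b_{\dd}=0$ we have $t_{\dd}=u_{\dd}\leq p^r-1<N$.

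The delicate step, and essentially the only non-routine one, is checking that the coordinatewise positivity of $\sum_D u_{\dd}\dd$ is preserved when passing to $\sum_D t_{\dd}\dd$. Writing coordinate by coordinate
$$\left(\sum_D u_{\dd}\dd\right)_k=\left(\sum_D a_{\dd}\dd\right)_k+p^r\left(\sum_D b_{\dd}\dd\right)_k>0,$$
positivity forces at least one of the two nonnegative integers $(\sum_D a_{\dd}\dd)_k$ or $(\sum_D b_{\dd}\dd)_k$ to be positive, so their mere sum $(\sum_D t_{\dd}\dd)_k$ is again at least $1$. With this verification in hand the inductive step goes through, and the lemma follows.
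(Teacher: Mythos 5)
Your proof is correct and follows essentially the same route as the paper: reduce each $u_{\dd}$ modulo $p^r-1$ via repeated euclidean division by $p^r$, using $s_p(a+p^r b)=s_p(a)+s_p(b)\geq s_p(a+b)$ at each step. The paper packages this reduction into a separate lemma and leaves the preservation of positivity implicit (it follows at once from the fact that $\overline{u}>0$ exactly when $u>0$), whereas you interleave the steps as an induction on $\max_{\dd}u_{\dd}$ and verify positivity explicitly; the underlying argument is the same.
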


\begin{proof}
For $u$ a positive integer, let $\overline{u}\in \{1,\cdots,p^r-1\}$ be the integer defined by $u\equiv \overline{u}$ mod $p^r-1$. If $u=0$, we set $\overline{u}:=0$. Then the congruence $\sum_D \overline{u}_{\dd} \dd \equiv 0~[p^r-1]$ trivially holds, and the sum $\sum_D \overline{u}_{\dd} \dd$ has all its coordinates positive. Finally the inequality $\sum_D s_p(\overline{u}_{\dd})\geq s_{D,p}(r)$ comes from the definition of this last integer, and the result is a consequence of the following lemma.
\end{proof}

\begin{lemma}
\label{qred}
Notations being as in the proof above, we have the inequality $s_p(u)\geq s_p(\overline{u})$. 
\end{lemma}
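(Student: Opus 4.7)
The plan is to prove the inequality by strong induction on $u$, using the standard iterative reduction that replaces $u$ by the sum of its top and bottom blocks of $r$ base-$p$ digits. The basic tool is the subadditivity of $s_p$ under addition, namely $s_p(a+b)\leq s_p(a)+s_p(b)$, which follows from the identity $s_p(a)+s_p(b)-s_p(a+b)=(p-1)\kappa$ where $\kappa$ counts the carries when adding $a$ and $b$ in base $p$.

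The base case is $0<u<p^r$: here $u\in\{1,\ldots,p^r-1\}$, so $u$ is already its own canonical representative, i.e.\ $\overline{u}=u$, and equality holds. For the inductive step, assume $u\geq p^r$ and write $u=u_0+p^r u_1$ with $0\leq u_0<p^r$ and $u_1\geq 1$. Since $u_0$ uses at most the bottom $r$ digits in base $p$, the base-$p$ expansions of $u_0$ and $u_1$ do not overlap, so $s_p(u)=s_p(u_0)+s_p(u_1)$.

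Now set $u':=u_0+u_1$. Because $p^r\equiv 1\pmod{p^r-1}$, one has $u'\equiv u\pmod{p^r-1}$, and since $\overline{\cdot}$ depends only on the class mod $p^r-1$, $\overline{u'}=\overline{u}$. Subadditivity gives
$$s_p(u')\leq s_p(u_0)+s_p(u_1)=s_p(u).$$
Moreover $u'=u_0+u_1\geq 1$ (as $u_1\geq 1$) and $u'<u_0+p^r u_1=u$ (since $u_1\geq 1$ forces $u_1<p^r u_1$). The induction hypothesis therefore applies to $u'$, yielding
$$s_p(\overline{u})=s_p(\overline{u'})\leq s_p(u')\leq s_p(u).$$

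The only delicate point is the convention that $\overline{u}$ lives in $\{1,\ldots,p^r-1\}$ rather than $\{0,\ldots,p^r-2\}$, so that when $u$ is a positive multiple of $p^r-1$ one has $\overline{u}=p^r-1$ rather than $0$. But the iteration $u\mapsto u'$ strictly decreases $u$ while keeping it positive, so it can never terminate at $0$; it necessarily stops inside $\{1,\ldots,p^r-1\}$, which is exactly the range of $\overline{u}$. I do not anticipate a real obstacle beyond packaging this carry-count inequality carefully; the only ``technical'' ingredient is the subadditivity of $s_p$, which is classical.
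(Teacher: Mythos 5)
Your proof is correct and follows essentially the same route as the paper's: write the Euclidean division $u = p^r u_1 + u_0$, use $s_p(u) = s_p(u_0) + s_p(u_1) \geq s_p(u_0 + u_1)$, and iterate. You add a welcome amount of rigor over the paper's terse "repeating the same process, we finally get $\overline{u}$" by spelling out the strong induction, the strict decrease of $u$, and the reason the iteration lands in $\{1,\dots,p^r-1\}$ rather than at $0$; but the underlying idea is identical.
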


\begin{proof}
Write the euclidean division of $u$ by $p^r$, $u=p^ru_1+v_1$. The $p$-weights of these integers satisfy $s_p(u)=s_p(u_1)+s_p(v_1)\geq s_p(u_1+v_1)$. Replacing $u$ by $u_1+v_1$, and repeating the same process, we finally get $\overline{u}$ and the result.
\end{proof}

We end these preliminary results giving an inequality between the densities of the set $D$ and of the subsets $D_I$ from Definition \ref{defNI}. It explains the range of the product in Theorem \ref{Lcong}.

\begin{lemma}
\label{DetDI}
Assume that $D$ is not contained in any of the  coordinate hyperplanes of $\R^n$. Let $I\subset\{1,\ldots,n\}$, with $\# I=k$; then we have the inequality $\delta_p(D)\leq \delta_p(D_I)+n-k$.
\end{lemma}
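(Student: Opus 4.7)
The approach is to lift a minimal solution on $D_I$ to a solution on $D$ of controlled density by "padding" the coordinates outside $I$. If $\delta_p(D_I)=\infty$ the inequality is vacuous, so I would assume $\delta_p(D_I)<\infty$, which means $D_I$ itself is not contained in any coordinate hyperplane of $\R^I$. By Proposition \ref{princ} applied to $D_I$, I can pick a minimal solution $U_I=(u_{\dd})_{\dd\in D_I}\in E_{D_I,p}(r)$ realizing the density, i.e. with $\delta(U_I)=\delta_p(D_I)$.

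Set $J:=\{1,\ldots,n\}\setminus I$. Since $D$ is not contained in any coordinate hyperplane of $\R^n$, for each $j\in J$ I can pick some $\dd^{(j)}\in D$ with $(\dd^{(j)})_j>0$; note that $\dd^{(j)}\notin D_I$ because $j\notin I$. I then define $V=(v_{\dd})_{\dd\in D}$ by
$$v_{\dd}=\begin{cases} u_{\dd}&\text{if }\dd\in D_I,\\ p^r-1&\text{if }\dd\in\{\dd^{(j)}:j\in J\},\\ 0&\text{otherwise.}\end{cases}$$

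Next I would check that $V\in E_{D,p}(r)$. Clearly $v_{\dd}\in\{0,\ldots,p^r-1\}$. The congruence $\sum_D v_{\dd}\dd\equiv 0\mod p^r-1$ follows because $U_I$ already satisfies it and every $(p^r-1)\dd^{(j)}$ is trivially $\equiv 0$. Positivity of $\sum_D v_{\dd}\dd$ in every coordinate splits into two cases: for $i\in I$ it is inherited from the $U_I$-part, while for $j\in J$ the specific term $(p^r-1)\dd^{(j)}$ contributes $(p^r-1)(\dd^{(j)})_j>0$ (and no term is negative), giving the strict positivity. Finally, since the set $\{\dd^{(j)}:j\in J\}$ has cardinality at most $n-k$ and each of its elements contributes $s_p(p^r-1)=(p-1)r$ to the weight, one obtains
$$s_p(V)\leq s_p(U_I)+(n-k)(p-1)r,$$
and dividing by $(p-1)r$ yields $\delta(V)\leq \delta_p(D_I)+(n-k)$. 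Since $\delta_p(D)\leq \delta(V)$ by definition, this proves the lemma.

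The only mild subtlety worth watching is that the chosen $\dd^{(j)}$'s need not be distinct: a priori one might fear having to set $v_{\dd}$ equal to some multiple of $p^r-1$, which would exceed $p^r-1$. This issue is sidestepped by assigning $v_{\dd}=p^r-1$ (not a multiplicity-weighted value), because a single copy of $(p^r-1)\dd$ already contributes positively to every coordinate in which $\dd$ is positive; no accumulation is needed. This is really the only non-routine point, and the rest of the argument is direct verification.
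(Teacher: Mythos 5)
Your proof is correct and follows essentially the same strategy as the paper's: extend a minimal solution on $D_I$ to a solution on $D$ by setting $v_{\dd}=p^r-1$ on a set of at most $n-k$ elements of $D\setminus D_I$ chosen to cover the coordinates outside $I$, then bound the resulting density. The only cosmetic difference is that the paper selects these elements by a greedy covering process (each $\dd_i$ adds at least one new coordinate, so at most $n-k$ are needed), whereas you pick one $\dd^{(j)}$ per $j\in J$ and observe that the resulting set has cardinality at most $|J|=n-k$; both yield the same bound and the same conclusion.
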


\begin{proof}
If $D_I$ is contained in some coordinate hyperplane of $\N_I$, then we have $\delta_p(D_I)=\infty$, and there is nothing to prove. Else let $U=(u_{\dd})_{D_I}$ denote a minimal solution in $E_{D_I,p}(r)$. From our hypothesis on $D$, we can choose some $\dd_1\in D$ whose support $[\dd_1]$ is not contained in $I$; if $I\cup[\dd_1]=\{1,\ldots,n\}$, we stop; else we choose some $\dd_2$ such that $[\dd_2]\subsetneq I\cup[\dd_1]$, until we get $\dd_1,\dd_2,\ldots,\dd_l$ with $I\cup[\dd_1]\cup\ldots\cup[\dd_l]=\{1,\ldots,n\}$. We must have $l\leq n-k$.

Now consider $V:=((u_{\dd})_{D_I},u_{\dd_1}=p^r-1,\ldots,u_{\dd_l}=p^r-1)$; this is an element in $E_{D,p}(r)$, with density $\delta(V)=\delta(U)+l\leq \delta_p(D_I)+n-k$. This ends the proof of the lemma.
\end{proof}

\subsection{Minimal solutions and their supports}

In this section, we assume that $D$ is not contained in any of the  coordinate hyperplanes of $\R^n$. We focus on minimal elements in the sets $E_{D,p}(r)$ and we define their supports. They appear in Section 3 in the location of the minors with minimal valuation for the matrices of Dwork's operators.

\begin{definition}
Let $\delta_r$ be the {\rm shift}, the map from the set $\{0,\dots,p^r-1\}$ to itself, which sends any integer $0\leq k\leq p^r-2$ to the residue of $pk$ modulo $p^r-1$, and $p^r-1$ to itself. 
We define a map
$$\begin{array}{ccccc}
\varphi_r & : & E_{D,p}(r) & \rightarrow & (\N_{>0})^n \\
        &   & U      & \mapsto & \frac{1}{p^r-1} \sum_{\dd\in D} \dd u_{\dd}\\
        \end{array}$$
To each solution $U$ in $E_{D,p}(r)$, we associate a map $\varphi_U$ from $\Z/r\Z$ to $\N_{>0}^n$ defined by 
$$\varphi_U(k):=\varphi_r(\delta_r^k(U));$$
we say $U$ is {\rm irreducible} when $\varphi_U$ is an injection. We call $\varphi_U$ the {\rm support} of $U$.

We denote by $MI_{D,p}(r)$ the set of minimal irreducible elements in $E_{D,p}(r)$.
\end{definition}

\begin{remark}

Note that the map $\delta_r$ acts on $E_{D,p}(r)$; moreover it shifts the $p$-digits, hence his name. As a consequence, it preserves the $p$-weight, and minimality. 

Note also that minimal irreducible elements do exist for some $r$, as shows the reduction process in the proof of \cite[Proposition 1.1]{bl}.
\end{remark}

We begin with a lemma

\begin{lemma} 
\label{mini}
We have the following
\begin{itemize}
	\item[(i)] The sets $MI_{D,p}(r)$ are pairwise disjoint;
	\item[(ii)] the set $MI_{D,p}(r)$ is empty for $r$ large enough;
	\item[(iii)] the map $\delta_r$ sends $MI_{D,p}(r)$ to itself; moreover we have for any $i,k$ $\varphi_{\delta_r^k U}(i)=\varphi_U(i+k)$.
\end{itemize}

\end{lemma}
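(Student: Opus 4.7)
For assertion (i), the crucial observation is that minimality ties the length $r$ to the intrinsic quantity $s_p(U)$. If $U\in MI_{D,p}(r)$, then by the definition of density and minimality,
\[
s_p(U) \;=\; (p-1)\,r\,\delta_p(D).
\]
The integer $s_p(U)=\sum_{\dd}s_p(u_{\dd})$ depends only on the tuple $(u_{\dd})_{\dd\in D}$ and not on the length parameter $r$. Moreover $s_p(U)\geq 1$, since $\sum u_{\dd}\dd>0$ forces at least one $u_{\dd}$ to be positive, and $\delta_p(D)>0$: by Proposition~\ref{princ} the minimum is attained at some $r_{0}$, whence $\delta_p(D)\geq 1/((p-1)r_{0})$. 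If the same tuple were in $MI_{D,p}(r)\cap MI_{D,p}(r')$, equating the two expressions for $\delta_p(D)$ would force $r=r'$.

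For assertion (ii), I will produce a uniform upper bound on $r$ by exhibiting a finite set that contains all possible values of $\varphi_r$, independently of $r$. Since $v_{\dd}\leq p^r-1$ for every $V\in E_{D,p}(r)$, each coordinate of $\varphi_r(V)$ satisfies
\[
\varphi_r(V)_j \;=\; \frac{1}{p^r-1}\sum_{\dd\in D} v_{\dd}\,d_j \;\leq\; \sum_{\dd\in D} d_j.
\]
Combined with $\varphi_r(V)\in(\N_{>0})^n$, this places $\varphi_r(V)$ in the finite box $F:=\prod_{j=1}^n\bigl\{1,\ldots,\sum_{\dd\in D} d_j\bigr\}$, whose cardinality depends only on $D$. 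Irreducibility of $U\in E_{D,p}(r)$ says $\varphi_U\colon \Z/r\Z\to F$ is injective, so $r\leq |F|$; hence $MI_{D,p}(r)=\emptyset$ whenever $r>|F|$.

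Assertion (iii) is then essentially formal. From the definitions,
\[
\varphi_{\delta_r^k U}(i) \;=\; \varphi_r\bigl(\delta_r^i(\delta_r^k U)\bigr) \;=\; \varphi_r(\delta_r^{i+k}U) \;=\; \varphi_U(i+k),
\]
so $\varphi_{\delta_r U}$ is obtained from $\varphi_U$ by precomposition with a cyclic shift on $\Z/r\Z$, and is injective iff $\varphi_U$ is. Since $\delta_r$ already preserves the $p$-weight (digit rotation) and hence the density and minimality, as noted in the remark preceding the lemma, it therefore sends $MI_{D,p}(r)$ into itself, and the displayed relation between the supports follows immediately.

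The only non-routine step is (ii): once one notices that normalising by $p^r-1$ cancels the growth of the $v_{\dd}$, so that $\varphi_r$ takes values in a $D$-dependent but $r$-independent box, the length bound drops out of injectivity of $\varphi_U$. Parts (i) and (iii) are short consequences of the definitions combined with $\delta_p(D)>0$.
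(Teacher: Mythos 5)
Your proof is correct. Parts (i) and (iii) follow essentially the same route as the paper: for (i), the density $\delta(U)=s_p(U)/((p-1)r)$ depends explicitly on $r$, so a tuple cannot be minimal for two distinct lengths; for (iii), the compatibility of $\delta_r$ with $p$-weight, minimality and support is read off from the definitions exactly as the paper notes.

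For (ii) you take a genuinely different route. The paper simply cites \cite[Lemma 1.3 iii/]{bl}, whereas you give a self-contained counting argument: since each $u_{\dd}\le p^r-1$, the normalization by $p^r-1$ forces every value of $\varphi_r$ (hence of $\varphi_U$) into the $r$-independent finite box $F:=\prod_{j}\{1,\dots,\sum_{\dd\in D}d_j\}$, and injectivity of $\varphi_U$ on $\Z/r\Z$ then gives $r\le|F|$. This is cleaner for the reader than a black-box citation and even yields an explicit upper bound on the lengths that can occur, while the paper's reference presumably encodes the same finiteness but leaves it opaque. The one small thing to be aware of is that you implicitly use that $\delta_r$ maps $E_{D,p}(r)$ into itself (so that every $\varphi_U(k)=\varphi_r(\delta_r^kU)$ is bounded by the box), but this is exactly what the remark preceding the lemma records, so there is no gap.
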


\begin{proof}
Note that an element $U\in \N^{\# D}$ can belong simultaneously to various sets $E_{D,p}(r)$ when $r$ varies, but at most to one set $MI_{D,p}(r)$ since the density of $U\in E_{D,p}(r)$ depends on the integer $r$. This proves (i), while assertion (ii) is a consequence of \cite[Lemma 1.3 iii/]{bl}. 

Concerning the last assertion, it suffices to remark that the map $\delta_r$ preserves the $p$-weight (hence minimality); the assertion about the support is a direct consequence of the definitions.

\end{proof}

We define some sets coming from minimal irreducible solutions.

\begin{definition}

We set 
$$MI_{D,p}:=\coprod_r MI_{D,p}(r):=\{U_1,\ldots,U_t\}.$$ 
We define the $p$-{\rm minimal support} of $D$ as
$$\Sigma_{p}(D)=\cup_{i=1}^t \Ima \varphi_{U_i},$$
and set $N_{p}(D):=\#\Sigma_{p}(D)$ to denote its cardinality.
\end{definition}

Note that the sets $MI_{D,p}(n)$, $MI_{D,p}$ and $\Sigma_p(D)$ are finite from Lemma \ref{mini}.

\medskip

We prove a lemma that we shall use further: it will help us locate the minors with minimal valuation in a matrix of the operator $\alpha_m$ in Section 3. 

\begin{lemma} 
\label{suppmin}
Let $U\in E_{D,p}(r)$ be a minimal solution. Then the image $\Ima \varphi_U$ of its support is contained in the minimal support $\Sigma_{p}(D)$.
\end{lemma}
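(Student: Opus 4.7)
The plan is to argue by strong induction on $r=\ell(U)$. If $U$ is already irreducible, then $U\in MI_{D,p}(r)\subset MI_{D,p}$ and $\Ima\varphi_U\subset\Sigma_p(D)$ holds directly from the definition of the $p$-minimal support. Otherwise $\varphi_U$ is not injective, so there exist $0\leq k<k'<r$ with $\varphi_U(k)=\varphi_U(k')$. Replacing $U$ by $\delta_r^kU$---which remains minimal of length $r$ by Lemma~\ref{mini}(iii) and shares the same image $\Ima\varphi_U$---we may assume $k=0$ and set $s:=k'\in\{1,\ldots,r-1\}$, so that $\varphi_U(0)=\varphi_U(s)$.

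For each $\dd\in D$ split $u_\dd\in\{0,\ldots,p^r-1\}$ along its $p$-adic expansion into the low $r-s$ and high $s$ digits, writing $u_\dd=u'_\dd+p^{r-s}q_\dd$ with $0\leq u'_\dd<p^{r-s}$ and $0\leq q_\dd<p^s$; set $U':=(u'_\dd)$ and $Q:=(q_\dd)$. One checks the identity $\delta_r^s(u_\dd)=p^su'_\dd+q_\dd$ (both sides lie in $\{0,\ldots,p^r-1\}$ and are congruent to $p^su_\dd$ modulo $p^r-1$). Feeding this into the hypothesis $\sum_\dd u_\dd\dd=\sum_\dd\delta_r^s(u_\dd)\dd$ and solving the resulting linear system in $\sum u'_\dd\dd$ and $\sum q_\dd\dd$ yields
$$\sum_\dd u'_\dd\dd=(p^{r-s}-1)\varphi_U(0),\qquad \sum_\dd q_\dd\dd=(p^s-1)\varphi_U(0).$$
Since $\varphi_U(0)\in(\N_{>0})^n$, this places $U'\in E_{D,p}(r-s)$ and $Q\in E_{D,p}(s)$. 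Disjointness of the digit ranges gives $s_p(U)=s_p(U')+s_p(Q)$, and comparing with $\delta(U)=\delta_p(D)$ together with the lower bounds $\delta(U'),\delta(Q)\geq\delta_p(D)$ forces equality throughout, so $U'$ and $Q$ are both minimal.

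The core of the induction is the pair of identities
$$\varphi_U(i)=\varphi_Q(i)\quad(0\leq i\leq s-1),\qquad \varphi_U(i)=\varphi_{U'}(i-s)\quad(s\leq i\leq r-1).$$
Writing $W_j:=\sum_\dd a_j^{(\dd)}\dd$ for the $j$th digit vector of $U$ and expanding $\varphi_U(i)=(p^r-1)^{-1}\sum_{j=0}^{r-1}p^jW_{j-i\bmod r}$, one splits the sum according as $j-i\bmod r$ lies in $\{0,\ldots,r-s-1\}$ (digits coming from $U'$) or in $\{r-s,\ldots,r-1\}$ (digits coming from $Q$); substituting the two displayed linear identities then collapses the expression to $\varphi_Q(i)$, respectively $\varphi_{U'}(i-s)$. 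The second range can alternatively be deduced from the first by applying it to $\delta_r^sU$, whose digit decomposition swaps the roles of $U'$ and $Q$. Consequently $\Ima\varphi_U=\Ima\varphi_Q\cup\Ima\varphi_{U'}$, and since $s,\,r-s<r$, the inductive hypothesis gives $\Ima\varphi_U\subset\Sigma_p(D)$.

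The one delicate point is the digit-level bookkeeping in the third paragraph: one must keep careful track of the fact that the $r$-digit expansion of $u'_\dd$ (padded with zeros in positions $r-s,\ldots,r-1$) and of $p^{r-s}q_\dd$ (supported in exactly those high positions) interact correctly with the cyclic shift $\delta_r^i$, so that the two previously derived linear identities can be invoked at the right stage. Everything else---existence of the decomposition, admissibility of $U'$ and $Q$ in their respective $E_{D,p}$ sets, their minimality, and the base case---is immediate from the definitions and from Lemma~\ref{mini}.
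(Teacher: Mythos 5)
Your proposal is correct and follows essentially the same route as the paper's proof: after shifting so that $\varphi_U(0)=\varphi_U(s)$, you split each $u_\dd$ by Euclidean division at $p^{r-s}$ (the paper writes $u_\dd=p^{r-t}w_\dd+v_\dd$, which with $t=s$ is your $u'_\dd$ and $q_\dd$), derive the two linear identities for $\sum u'_\dd\dd$ and $\sum q_\dd\dd$, use the barycenter argument to transfer minimality, and conclude $\Ima\varphi_U=\Ima\varphi_{U'}\cup\Ima\varphi_Q$. The only differences are cosmetic: you formalize the paper's ``apply the same process to $V$ or $W$'' as an explicit strong induction on $r$, and you re-derive the support identity by hand at the digit level where the paper simply invokes \cite[Lemma~1.2~ii/]{bl}.
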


\begin{proof}
If $U$ is irreducible, this is clear from the definition of the minimal support $\Sigma_{p}(D)$. Else we can find integers $t_1< t_2$ in $\{0,\dots,r-1\}$ such that $\varphi(\delta_r^{t_1}(U))=\varphi(\delta_r^{t_2}(U))$. If we consider the element $\delta_r^{t_1}(U)$ instead of $U$ (they have the same $p$-weight, and their supports are shifted), we obtain some $0<t\leq r-1$ such that $\varphi(U)=\varphi(\delta_r^{t}(U))$.

For each $\dd$, let $u_{\dd}=p^{r-t}w_{\dd}+v_{\dd}$ be the result of the euclidean division of $u_{\dd}$ by $p^{r-t}$. Define the $\#D$-uples $V=(v_{\dd})$, and $W=(w_{\dd})$. From \cite[Lemma 1.2 ii/]{bl} and the definition of $t$, we have
$$\sum_{D} \dd v_{\dd}=(p^{r-t}-1)\varphi(U)~;~\sum_{D} \dd w_{\dd}=(p^{t}-1)\varphi(U).$$
Thus the tuples $V$ and $W$ are respectively contained in $E_{D,p}(r-t)$ and $E_{D,p}(t)$. From the definition of $p$-density, both $\delta(V)$ and $\delta(W)$ are greater than or equal to $\delta_p(D)$. But for each $\dd$ we have $s_p(u_{\dd})=s_p(v_{\dd})+s_p(w_{\dd})$, and $s_p(U)=s_p(V)+s_p(W)$. Since $U$ is minimal, we have 
$$\delta_p(D)=\frac{s_p(V)+s_p(W)}{r(p-1)}=\left(1-\frac{t}{r}\right)\delta(V)+\frac{t}{r}\delta(W)\geq \delta_p(D).$$ 
Thus both solutions $V$ and $W$ are minimal, and again from \cite[Lemma 1.2 ii/]{bl}, we have $\Ima \varphi_U=\Ima \varphi_V\cup\Ima \varphi_W$. If both $V$ and $W$ are irreducible, we are done; else we apply the same process to $V$ or $W$, and we end with minimal irreducible elements $U_{1},\cdots,U_{k}$ in $E_{D,p}(r_1),\cdots , E_{D,p}(r_k)$ with $\Ima \varphi_U=\cup_i \Ima \varphi_{U_{i}}$, each support $\Ima \varphi_{U_{i}}$ being contained in the minimal support from its definition.
\end{proof}

We conclude this subsection by considering how one can glue together two minimal solutions.

\begin{lemma}
\label{gluemin}
Let $U\in E_{D,p}(r)$, and $U'\in E_{D,p}(r')$ denote two minimal solutions, such that $\varphi_U(0)=\varphi_{U'}(0)$. We define $V=(v_{\dd})_D$ by $v_{\dd}:=p^ru_{\dd}'+u_{\dd}$.

Then $V$ is a minimal element in $E_{D,p}(r+r')$, whose support is $\varphi_V$ defined by $\varphi_V(i)=\varphi_{U'}(i)$ for $0\leq i\leq r'$, and $\varphi_V(i+r')=\varphi_{U}(i)$ for $1\leq i\leq r-1$.
\end{lemma}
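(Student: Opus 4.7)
The plan is to verify three claims in turn: $V\in E_{D,p}(r+r')$, $V$ is minimal, and the support $\varphi_V$ has the stated concatenated form. The first two are formal; the third is the real computation.

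For membership, the bound $0\le v_\dd\le p^{r+r'}-1$ follows immediately from $u_\dd<p^r$ and $u_\dd'<p^{r'}$. Set $X:=\varphi_U(0)=\varphi_{U'}(0)$ (this is the hypothesis). Using $\sum_D u_\dd\dd=(p^r-1)X$ and $\sum_D u_\dd'\dd=(p^{r'}-1)X$, I compute
$$\sum_D v_\dd\dd=p^r(p^{r'}-1)X+(p^r-1)X=(p^{r+r'}-1)X,$$
via the elementary identity $p^r(p^{r'}-1)+(p^r-1)=p^{r+r'}-1$. This shows $V\in E_{D,p}(r+r')$ and incidentally $\varphi_V(0)=X$. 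For minimality, since $u_\dd<p^r$ the $p$-adic digits of $v_\dd=p^ru_\dd'+u_\dd$ are the concatenation of the $r$ digits of $u_\dd$ and the $r'$ digits of $u_\dd'$, so $s_p(v_\dd)=s_p(u_\dd)+s_p(u_\dd')$. Summing and using minimality of $U$ and $U'$ gives $s_p(V)=(p-1)r\delta_p(D)+(p-1)r'\delta_p(D)=(p-1)(r+r')\delta_p(D)$, hence $\delta(V)=\delta_p(D)$.

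For the support, I compute $(\delta_{r+r'}^i V)_\dd$ explicitly for each $i$, splitting into the ranges $0\le i\le r'$ and $r'\le i\le r+r'-1$. In the first range, write $u_\dd'=b_\dd^{\rm lo}+p^{r'-i}b_\dd^{\rm hi}$ with $b_\dd^{\rm lo}<p^{r'-i}$, $b_\dd^{\rm hi}<p^i$. A direct manipulation gives $p^iv_\dd\equiv b_\dd^{\rm hi}+p^iu_\dd+p^{r+i}b_\dd^{\rm lo}\pmod{p^{r+r'}-1}$, with the right side already in $[0,p^{r+r'}-1]$ (so consistent with the convention $\delta_{r+r'}(p^{r+r'}-1)=p^{r+r'}-1$). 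Summing with weight $\dd$ and setting $A=\sum_D\dd b_\dd^{\rm lo}$, $B=\sum_D\dd b_\dd^{\rm hi}$, one obtains $\sum_D\dd(\delta_{r+r'}^iV)_\dd=B+p^i(p^r-1)X+p^{r+i}A$. Using the constraint $A+p^{r'-i}B=(p^{r'}-1)X$ (from $\sum_D\dd u_\dd'=(p^{r'}-1)X$), the algebra collapses to $(p^{r+r'}-1)\cdot\frac{p^iA+B}{p^{r'}-1}=(p^{r+r'}-1)\varphi_{U'}(i)$, giving $\varphi_V(i)=\varphi_{U'}(i)$. The symmetric argument for $r'+1\le i\le r+r'-1$, with $j=i-r'$ and the analogous split $u_\dd=c_\dd^{\rm lo}+p^{r-j}c_\dd^{\rm hi}$, yields $\varphi_V(i)=\varphi_U(j)$.

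The main obstacle is precisely this algebraic collapse: the mixed sum contains contributions from both the low/high split of $u_\dd'$ and from $u_\dd$, weighted by $1$, $p^i$, and $p^{r+i}$ respectively, and one must verify that they telescope to the target. The key identity driving the collapse is $p^{r+r'}-1=(p^{r'}-1)+p^{r'}(p^r-1)$, and the hypothesis $\varphi_U(0)=\varphi_{U'}(0)$ is used in an essential way, precisely because it is what lets the ``extra'' $(p^r-1)X$ term reassemble with $\sum_D\dd u_\dd'$ into a multiple of $p^{r+r'}-1$. The boundary case $i=r'$ (where one finds directly $(\delta_{r+r'}^{r'}V)_\dd=u_\dd'+p^{r'}u_\dd$ and hence $\varphi_V(r')=X$) is consistent with both endpoint formulas; similarly the edge case $v_\dd=p^{r+r'}-1$, which forces $u_\dd=p^r-1$ and $u_\dd'=p^{r'}-1$, is handled automatically since all three shifts fix these maximal entries and all three computed formulas then yield $p^{r+r'}-1$.
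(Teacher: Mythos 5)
Your proof is correct and follows the same overall strategy as the paper's: verify membership by the direct computation $\sum_D v_{\dd}\dd = (p^{r+r'}-1)\varphi_U(0)$, deduce minimality from the fact that the density of $V$ is a weighted average of those of $U$ and $U'$ (your digit-concatenation argument for $s_p(v_\dd)=s_p(u_\dd)+s_p(u'_\dd)$ is an equivalent rephrasing of the paper's barycenter remark), and then chase the shifts to identify $\varphi_V$. The only difference is in the support computation: the paper invokes \cite[Lemma~1.2~ii/]{bl}, which relates the low-order base-$p$ digits of a solution to the values of its support map, and so dispatches both ranges of $i$ in two lines; you instead carry out the modular arithmetic for $p^i v_\dd$ mod $p^{r+r'}-1$ from scratch, splitting $u'_\dd$ (resp.\ $u_\dd$) into high and low parts and verifying the algebraic collapse explicitly. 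This makes your argument self-contained at the cost of length; the paper's is shorter but leans on the cited lemma as a black box. Both are correct, and your care with the degenerate case $v_\dd=p^{r+r'}-1$ is a welcome extra that the paper glosses over.
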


\begin{proof}
An easy calculation gives 
$$\sum_D \dd v_{\dd}=p^r\sum_D \dd u'_{\dd}+\sum_D \dd u_{\dd}=p^r(p^{r'}-1)\varphi_{U'}(0)+(p^r-1)\varphi_{U}(0)=(p^{r+r'}-1)\varphi_{U}(0),$$
and $V$ is a solution of length $r+r'$, with $\varphi_{V}(0)=\varphi_{U}(0)$; its density is a barycenter (as in the proof of Lemma \ref{suppmin}) of the densities of $U$ and $U'$. Thus it is also minimal.
We show the assertion about the support. Fix some $1\leq i\leq r$; for any $\dd\in D$, the remainder of the euclidean division of $v_{\dd}$ by $p^i$ is the same as that of $u_{\dd}$ by $p^i$; from \cite[Lemma 1.2 ii/]{bl} and since we have $\varphi_{V}(0)=\varphi_{U}(0)$, we get $\varphi_{V}(-i)=\varphi_{U}(-i)$. Now fix $r+1\leq i\leq r+r'-1$. Here the remainder of the euclidean division of $v_{\dd}$ by $p^i$ is $p^r w_{\dd}+u_{\dd}$, where $w_{\dd}$ is the remainder of the euclidean division of $U$ by $p^{i-r}$. Using \cite[Lemma 1.2 ii/]{bl} again, we get $\varphi_{V}(-i)=\varphi_{U'}(r-i)$. This is the result.

\end{proof}

\subsection{Digits of minimal solutions}
We end this section with some results about base $p$ digits of minimal solutions. We also define some sets that will be the building blocks for the matrices $M(\Gamma)$ appearing in the congruence.

\begin{definition}
\label{basepdigit}
Define the map $\psi:MI_{D,p}\rightarrow \{0,\ldots,p-1\}^{\# D}$ by $\psi(U)=(u_{d0})_D$, where for each $d\in D$, $u_{d0}$ is the remainder of the euclidean division of $u_d$ by $p$.
Let $\ee,\ee'\in \Sigma_p(D)$ denote elements in the minimal support. We define the set $V(\ee,\ee')$ as
$$V(\ee,\ee'):=\{\psi(U),~U\in MI_{D,p},~\varphi_U(-1)=\ee,~\varphi_U(0)=\ee'\}\subset \{0,\ldots,p-1\}^{\# D}.$$
For any $V=(v_{\dd})_D\in \{0,\ldots,p-1\}^{\# D}$, we define its {\rm weight} by $w(V):=\sum_D v_{\dd}$.
\end{definition}

\begin{remark}
Note that the conditions $\varphi_U(-1)=\ee,~\varphi_U(0)=\ee'$, joint with \cite[Lemma 1.2 ii/]{bl}, ensure the equality $\sum_D \dd v_{\dd}=p\ee-\ee'$. 
\end{remark}

We begin with some technical results

\begin{lemma}
\label{constantweight}
Let $\ee,\ee'\in \Sigma_p(D)$, and assume $V(\ee,\ee')$ is non empty. Then all $v\in V(\ee,\ee')$ have the same weight. We denote it by $w(\ee,\ee')$.
\end{lemma}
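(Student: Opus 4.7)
The plan is to use a digit-swap argument: given two elements $v, v' \in V(\ee,\ee')$, I would build from a minimal irreducible solution realizing $v$ a new solution (of the same length) that realizes $v'$ in the appropriate position, and then invoke minimality to force $w(v') \geq w(v)$; symmetry closes the loop.

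More precisely, pick $U=(u_{\dd})\in MI_{D,p}(r)$ with $\psi(U)=v$, $\varphi_U(-1)=\ee$, $\varphi_U(0)=\ee'$, and pick $U'\in MI_{D,p}(r')$ with $\psi(U')=v'$, $\varphi_{U'}(-1)=\ee$, $\varphi_{U'}(0)=\ee'$. The first step is to define $u^{new}_{\dd}:=u_{\dd}-v_{\dd}+v'_{\dd}$, which is legitimate because $v_{\dd}=u_{d0}$ is the $0$th base-$p$ digit of $u_{\dd}$, so $u_{\dd}-v_{\dd}$ is a non-negative multiple of $p$; since $v'_{\dd}\in\{0,\ldots,p-1\}$, the new integer $u^{new}_{\dd}$ still lies in $\{0,\ldots,p^r-1\}$, and its base-$p$ digits are $(v'_{\dd},u_{d,1},\ldots,u_{d,r-1})$.

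The second step is to verify $U^{new}\in E_{D,p}(r)$: using the remark after Definition \ref{basepdigit} (both $v$ and $v'$ satisfy $\sum_D\dd v_{\dd}=\sum_D\dd v'_{\dd}=p\ee-\ee'$), these two contributions cancel, giving
$$\sum_D\dd u^{new}_{\dd}=\sum_D\dd u_{\dd}-\sum_D\dd v_{\dd}+\sum_D\dd v'_{\dd}=(p^r-1)\ee',$$
which is a positive vector divisible by $p^r-1$. A direct digit count then yields $s_p(U^{new})=s_p(U)+w(v')-w(v)$.

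The third step is to apply minimality. Since $U$ is a minimal element of $E_{D,p}(r)$, we have $s_p(U)=(p-1)r\,\delta_p(D)$; on the other hand, by the definition of the density and Lemma \ref{qmin}, $s_p(U^{new})\geq s_{D,p}(r)\geq (p-1)r\,\delta_p(D)$. Combining these gives $w(v')\geq w(v)$. Running the same construction starting from $U'$ and swapping in $v$ for $v'$ yields the reverse inequality $w(v)\geq w(v')$, whence $w(v)=w(v')$. The only mild point to be careful about is checking that the ``replace the $0$th digit'' operation really produces an element of $E_{D,p}(r)$ with the claimed $p$-weight; the combinatorics of the replacement is routine but the cancellation in the modular constraint is exactly where the hypothesis $v,v'\in V(\ee,\ee')$ gets used, so this is the step that must be written out cleanly.
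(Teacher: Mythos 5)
Your proof is correct and uses essentially the same digit-swap argument as the paper: replace the $0$th base-$p$ digit of a minimal solution realizing one element of $V(\ee,\ee')$ with the digits of the other, check that the modular constraint and bounds are preserved because both satisfy $\sum_D \dd v_{\dd} = p\ee-\ee'$, and invoke minimality to compare $p$-weights. The only cosmetic difference is that the paper argues by contradiction (assuming $w(V)<w(V')$ and swapping inside the heavier one), whereas you obtain both inequalities directly by symmetry.
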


\begin{proof}
Choose $V,V'\in V(\ee,\ee')$, and assume $w(V)<w(V')$. We can find some $U\in MI_{D,p}(r)$, $U'\in MI_{D,p}(r')$ with $\psi(U)=V$ and $\psi(U')=V'$. Define $U''$ by setting $u_{\dd}''=u_{\dd}'-v_{\dd}'+v_{\dd}$ for any $\dd\in D$; we get $U''\in E_{D,p}(r')$ since $\sum_D \dd v_{\dd}=\sum_D \dd v_{\dd}'=p\ee-\ee'$. Moreover we have $s_p(U'')=s_p(U')-w(V')+w(V)<s_p(U')$, contradicting the minimality of $U'$. Thus all elements in $V(\ee,\ee')$ have the same weight.
\end{proof}

\begin{lemma}
\label{basepglue1}
Fix some (not necessarily distinct) $\ee_{-1},\ee_0,\ldots,\ee_{k-1} \in \Sigma_{D,p}$, such that for any $0\leq i\leq k-1$ we have $V(\ee_i,\ee_{i-1})\neq \emptyset$. 

For any $(V_i)_{0\leq i\leq k-1}\in \prod_{i=0}^{k-1} V(\ee_i,\ee_{i-1})$, there exists some minimal $U=(u_{\dd})_D$ of length $r\geq k$ such that the remainder of the euclidean division of $u_{\dd}$ by $p^k$ is $\sum_{i=0}^{k-1} p^i v_{i\dd}$. Moreover its support verifies $\varphi_U(i)=\ee_{r-1-i}$ for any $r-k\leq i\leq r$.
\end{lemma}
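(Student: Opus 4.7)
The plan is to argue by induction on $k$. The base case $k=1$ is immediate: take $U:=U_0$ for any lift $U_0\in MI_{D,p}(r_0)$ of $V_0$ (existing since $V_0\in V(\ee_0,\ee_{-1})$), so that the bottom digit of $U$ is $V_0$, its length $r_0\ge 1$, and $\varphi_U(0)=\ee_{-1}$, $\varphi_U(-1)=\ee_0$ by definition of $\psi$ and $V(\cdot,\cdot)$.

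For the inductive step $k-1\to k$, I would invoke the inductive hypothesis on the shifted data $(\ee_0,\ldots,\ee_{k-1})$ and $(V_1,\ldots,V_{k-1})$ to produce a minimal $U'$ of length $r'\ge k-1$ with bottom $k-1$ digits $v_{1,\dd},\ldots,v_{k-1,\dd}$ and supports $\varphi_{U'}(0)=\ee_0$, $\varphi_{U'}(r'-k+1)=\ee_{k-1}$. Choosing a lift $U_0$ of $V_0$ as in the base case, one has $\varphi_{\delta^{-1}(U_0)}(0)=\varphi_{U_0}(-1)=\ee_0=\varphi_{U'}(0)$, so Lemma \ref{gluemin} produces a minimal element $W:=p^{r_0}U'+\delta^{-1}(U_0)\in E_{D,p}(r_0+r')$. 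Setting $U:=\delta^{-(r_0-1)}(W)$ then yields a minimal solution of length $r:=r_0+r'\ge k$.

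The verifications are then as follows. For the digits: the top digit of $\delta^{-1}(U_0)$ (at position $r_0-1$) is the bottom digit of $U_0$, namely $V_0$, so the $V_j$'s occupy the consecutive positions $r_0-1,r_0,\ldots,r_0+k-2$ of $W$, and the cyclic shift $\delta^{-(r_0-1)}$ translates them to positions $0,1,\ldots,k-1$ of $U$. For the support at position $-k$: using $-k-(r_0-1)\equiv r'-k+1\pmod{r}$ together with Lemma \ref{gluemin} (valid since $0\le r'-k+1\le r'$) and the inductive hypothesis, one gets $\varphi_U(-k)=\varphi_W(r'-k+1)=\varphi_{U'}(r'-k+1)=\ee_{k-1}$. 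Iterating the identity $\varphi_U(-j)=p\varphi_U(-j-1)-\sum_D\dd u_{d,j}$ downward from $\varphi_U(-k)=\ee_{k-1}$, and using $\sum_D\dd v_{j,\dd}=p\ee_j-\ee_{j-1}$ for $0\le j\le k-1$, yields $\varphi_U(-j)=\ee_{j-1}$ for all $0\le j\le k$, i.e.~$\varphi_U(i)=\ee_{r-1-i}$ for $r-k\le i\le r$.

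Minimality is preserved throughout: Lemma \ref{gluemin} guarantees it for $W$, and cyclic shifts preserve length and $p$-weight. The main obstacle I foresee is the position bookkeeping under the successive gluing and the final cyclic shift---one must verify both that the $V_j$'s line up consecutively in $W$ and that the shift by $\delta^{-(r_0-1)}$ simultaneously brings them to the bottom of $U$ and aligns position $-k$ of $U$ with position $r'-k+1$ of $W$---but this reduces to mechanical computation once one spots the common support $\ee_0$ at which to glue.
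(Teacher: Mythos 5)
Your proof is correct and takes essentially the same route as the paper's: induction on $k$, gluing two minimal solutions via Lemma~\ref{gluemin} at a common support value, and then applying a cyclic shift to bring the prescribed digits to the bottom. The only (cosmetic) difference is the direction of the induction: the paper applies the hypothesis to the bottom $k$ digits $(\ee_{-1},\dots,\ee_{k-1})$ and glues the new \emph{top} digit $V_k$ onto a shifted copy of the inductive $U$, whereas you apply it to the top data $(\ee_0,\dots,\ee_{k-1})$ and glue the new \emph{bottom} digit $V_0$ (via $\delta^{-1}(U_0)$) underneath; in both cases the final $\delta$-shift realigns the digit block and the support computation via $\varphi_U(-j)=p\varphi_U(-j-1)-\sum_D\dd u_{\dd,j}$ goes through identically.
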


\begin{proof}
We show this result by induction on $k$. For $k=1$, this comes directly from the definitions of the map $\psi$ and the set $V(\ee_0,\ee_{-1})$: any $U\in MI_{D,p}(r)$ such that $\psi(U)=V$ satisfies the requirements of the lemma.

Assume the result is true for $k$ elements in the minimal support. Choose elements $\ee_{-1},\ee_0,\ldots,\ee_{k} \in \Sigma_{D,p}$ and $(V_i)_{0\leq i\leq k}$ as above. From the induction hypothesis, we get $U$ minimal of length $r\geq k$ with $\varphi_U(i)=\ee_{r-1-i}$ for any $r-k\leq i\leq r$; then from Lemma \ref{mini} we have $\varphi_{\delta_r^{r-k}U}(0)=\varphi_{U}(r-k)=\ee_{k-1}$. On the other hand we choose a minimal $U'$ of length $r'$ such that $\psi(U')=V_k$; we have $\varphi_{U'}(0)=\ee_{k-1}$. From Lemma \ref{gluemin}, $U''$ defined for any $\dd\in D$ by $u_{\dd}'':=p^ru_{\dd}'+\delta^{r-k}u_{\dd}$ is minimal of length $r+r'\geq k+1$. Moreover from this construction, we have the following base $p$ expansions for the $u_{\dd}''$
$$u_{\dd}''=\sum_{j=r+1}^{r+r'-1} p^j\ast+p^rv_{k\dd}+p^{r-k}\sum_{i=0}^{k-1}p^iv_{i\dd}+\sum_{j=0}^{r-k-1} p^j\ast$$
for some integers $\ast$ in $\{0,\ldots,p-1\}$. Thus $W:=\delta^{k-r} U''$ satisfies the requirement about the remainder. The one about the supports is a consequence of the last assertion of lemma \ref{gluemin}: the support of $U''$ satisfies $\varphi_{U''}(r'-1+i)=\ee_{k-i}$ for any $0\leq i\leq k+1$, thus the support of $W=\delta^{k+r'} U''$ satisfies $\varphi_W(i)=\varphi_{U''}(k+r'+i)=\ee_{r+r'-i-1}$ for any $r+r'-k-1\leq i\leq r+r'$; this is the last claim.
\end{proof}

\begin{corollary}
\label{basepglue}
Choose $\ee_{-1},\ldots,\ee_{n-1} \in \Sigma_{D,p}$ (not necessarily distinct), with $\ee_{-1}=\ee_{n-1}$, and such that for any $0\leq i\leq n-1$ we have $V(\ee_i,\ee_{i-1})\neq \emptyset$. 

For any $0\leq i\leq n-1$, choose $V_i=(v_{i\dd})_D\in V(\ee_i,\ee_{i-1})$; then $U$ defined by $u_{\dd}:=\sum_{i=0}^{n-1}p^i v_{i\dd}$ is a minimal solution in $E_{D,p}(n)$, with support defined for any $0\leq i\leq n-1$ by $\varphi_U(i)=\ee_{n-1-i}$
\end{corollary}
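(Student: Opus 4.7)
The plan is to reduce Corollary \ref{basepglue} to Lemma \ref{basepglue1} by using the cyclic hypothesis $\ee_{-1}=\ee_{n-1}$ to ``close the loop'' and extract a minimal solution of length exactly $n$ from the longer solution produced by the gluing lemma.

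First, I would verify directly that $U\in E_{D,p}(n)$. From $V_i\in V(\ee_i,\ee_{i-1})$ and the remark after Definition \ref{basepdigit} we have $\sum_D\dd v_{i\dd}=p\ee_i-\ee_{i-1}$, and a telescoping gives
$$\sum_D\dd u_{\dd}=\sum_{i=0}^{n-1}p^i(p\ee_i-\ee_{i-1})=p^n\ee_{n-1}-\ee_{-1}=(p^n-1)\ee_{n-1},$$
the last step using $\ee_{-1}=\ee_{n-1}$. Since $\ee_{n-1}\in\Sigma_p(D)\subset(\N_{>0})^n$ all coordinates are positive, and the digit bounds $0\le v_{i\dd}\le p-1$ give $u_{\dd}\le p^n-1$, so $U\in E_{D,p}(n)$. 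For the support I would apply the same telescoping to the cyclically shifted digit sequence: $\delta_n^k(u_{\dd})$ has digit $v_{(j-k)\bmod n,\dd}$ at position $j$, and the analogous computation (well-defined thanks to $\ee_{-1}=\ee_{n-1}$) yields $\varphi_U(k)=\ee_{n-1-k}$ for every $0\le k\le n-1$.

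The main obstacle is minimality. Here I would apply Lemma \ref{basepglue1} with the same data and with $k=n$, obtaining a minimal $U^*\in E_{D,p}(r^*)$ of some length $r^*\ge n$ such that the remainder of $u_{\dd}^*$ modulo $p^n$ equals $u_{\dd}$, and whose support satisfies $\varphi_{U^*}(i)=\ee_{r^*-1-i}$ for $r^*-n\le i\le r^*$. If $r^*=n$ then $U=U^*$ and we are done. If $r^*>n$, the cyclic hypothesis gives
$$\varphi_{U^*}(0)=\varphi_{U^*}(r^*)=\ee_{-1}=\ee_{n-1}=\varphi_{U^*}(r^*-n),$$
which is precisely the equality $\varphi(U^*)=\varphi(\delta_{r^*}^{r^*-n}U^*)$ exploited in the proof of Lemma \ref{suppmin}. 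That proof then applies verbatim with $t=r^*-n$: writing $u_{\dd}^*=p^nw_{\dd}+v_{\dd}$, both $V=(v_{\dd})_D\in E_{D,p}(n)$ and $W=(w_{\dd})_D\in E_{D,p}(r^*-n)$ are minimal. Since $v_{\dd}=u_{\dd}$ we conclude $U=V$ is minimal, completing the proof.

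The crux is recognizing that the two support indices available to us, namely $\varphi_{U^*}(r^*)=\ee_{-1}$ (from $i=r^*$ in Lemma \ref{basepglue1}) and $\varphi_{U^*}(r^*-n)=\ee_{n-1}$ (from $i=r^*-n$), coincide exactly when $\ee_{-1}=\ee_{n-1}$; this is why the cyclic hypothesis is precisely what is needed to invoke the splitting step at position $t=r^*-n$ and extract $U$ as the lower-digit minimal component of $U^*$.
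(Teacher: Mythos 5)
Your proof is correct and follows essentially the same route as the paper's: both invoke Lemma \ref{basepglue1} to produce a longer minimal solution whose low-order $n$ digits are the $v_{i\dd}$, and both then use the splitting step from the proof of Lemma \ref{suppmin} (extracting the low-digit block, which the cyclic hypothesis $\ee_{-1}=\ee_{n-1}$ makes possible) to conclude that this block is itself minimal. The only cosmetic difference is that you verify membership in $E_{D,p}(n)$ and the support claim by a direct telescoping computation, where the paper instead cites [Lemma 1.2 ii/] of \cite{bl}.
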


\begin{proof}
From the lemma above, one can construct $U$, minimal of length $r\geq n$, such that the remainder of the euclidean division of $u_{\dd}$ by $p^n$ is $u_{\dd}'=\sum_{i=0}^{n-1} p^i v_{i\dd}$. Moreover its support verifies $\varphi_U(i)=\ee_{r-1-i}$ for any $r-n\leq i\leq r$. If we apply \cite[Lemma 1.2 ii/]{bl}, we get $\sum_D \dd u_{\dd}'=p^n\varphi_U(-n)-\varphi_U(0)=(p^n-1)\ee_{-1}$. Thus $U'=(u_{\dd}')_D$ is an element of $E_{D,p}(n)$, with $\varphi_{U'}(0)=\ee_{-1}$. As in the proof of Lemma \ref{suppmin}, it is minimal since it comes from a minimal $U$. Finally the assertion about its support comes from \cite[Lemma 1.2 ii/]{bl} applied to the reductions modulo $p^i$ of $U$ and $U'$, which are the same for any $0\leq i\leq n-1$.
\end{proof}

With this in hand we give a decomposition of minimal solutions with fixed supports in term of their base $p$ digits.

\begin{proposition}
\label{decbasep}
Let $m\geq 1$ be an integer, and $\varphi$ denote a map from $\Z/m\Z$ to $\Sigma_p(D)$. If $M_{D,p}(\varphi)$ denotes the set of minimal elements in $E_{D,p}(m)$ whose support is $\varphi$, then we have

\begin{itemize}
	\item[(i)] the set $M_{D,p}(\varphi)$ is empty if, and only if (at least) one of the sets $V(\varphi(-i-1),\varphi(-i))$ is empty;
	\item[(ii)] else the map
$$B_\varphi : M_{D,p}(\varphi) \rightarrow \prod_{i=0}^{m-1} V(\varphi(-i-1),\varphi(-i))$$
sending $(u_{\dd})_{\d\in D}$ to its base $p$ digits $(u_{\dd,i})_{\d\in D, 0\leq i\leq m-1}$ is one-to-one.
\end{itemize}
\end{proposition}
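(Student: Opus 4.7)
The plan is to prove three things in turn: the map $B_\varphi$ is well-defined (that is, each digit vector of a minimal $U \in M_{D,p}(\varphi)$ genuinely lies in $V(\varphi(-i-1),\varphi(-i))$); that $B_\varphi$ is injective; and that $B_\varphi$ is surjective (from which both non-emptiness directions of (i) will drop out, using Corollary \ref{basepglue} and the well-definedness to go in each direction).

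The heart of the proof is establishing well-definedness: given $U \in M_{D,p}(\varphi)$ with base $p$ digits $V_i = (u_{\dd,i})_D$, one must show $V_i \in V(\varphi(-i-1),\varphi(-i))$ for each $0\le i\le m-1$. When $U$ is already irreducible this is immediate from Lemma \ref{mini}(iii): the shift $\delta_m^{-i} U$ is still minimal irreducible, has $V_i$ as its base-$p$ digit $\psi(\delta_m^{-i} U)$, and has support $\varphi_{\delta_m^{-i} U}(j)=\varphi_U(j-i)$, which at $j=-1,0$ yields precisely $\varphi(-i-1),\varphi(-i)$. When $U$ is reducible, I would adapt the splitting argument in the proof of Lemma \ref{suppmin}: picking some $0<t<m$ with $\varphi_U(0)=\varphi_U(-t)$ (after a shift if necessary), write $u_\dd = p^t w_\dd + v_\dd$ and check via Lemma 1.2 ii/ of \cite{bl} that $(v_\dd)\in E_{D,p}(t)$ and $(w_\dd)\in E_{D,p}(m-t)$ are themselves minimal, with supports obtained by restricting $\varphi_U$ to consecutive arcs of $\Z/m\Z$ in such a way that the relation $\sum_\dd \dd u_{\dd,i}=p\varphi(-i-1)-\varphi(-i)$ is inherited by the digit $V_i$ inside whichever piece contains it. Iterating the split reduces to the irreducible case, whence one applies the shift argument above to the appropriate piece.

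Once well-definedness is in place, part (i) and bijectivity follow with little extra work. For the reverse implication in (i), assume each $V(\varphi(-i-1),\varphi(-i))$ is non-empty and pick $V_i$ in each; applying Corollary \ref{basepglue} with $n=m$ and $\ee_j := \varphi(m-1-j)$ (so $\ee_{-1}=\ee_{m-1}=\varphi(0)$ holds and the non-emptiness hypotheses match up) produces a minimal $U\in E_{D,p}(m)$ with $u_\dd = \sum_{i=0}^{m-1} p^i v_{i\dd}$ and $\varphi_U = \varphi$, hence $U\in M_{D,p}(\varphi)$. This same construction shows $B_\varphi$ is surjective. Injectivity is trivial because a non-negative integer bounded by $p^m$ is determined by its base-$p$ digits. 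Finally, the forward implication in (i) is just well-definedness applied in the contrapositive: if some $V(\varphi(-i-1),\varphi(-i))$ is empty, no minimal $U$ with support $\varphi$ can exist, otherwise its $i$-th digit would inhabit that empty set.

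The main obstacle is the decomposition step: I must verify, when splitting a reducible minimal $U$ into two minimal pieces, that the support of each piece is the restriction of $\varphi_U$ to the corresponding arc of $\Z/m\Z$, and not merely that the pieces are minimal. Without this support-matching, the shift used to place a given digit $V_i$ at position $0$ of an irreducible sub-piece need not realize $(\varphi(-i-1),\varphi(-i))$ as its $(-1,0)$-support, and the conclusion $V_i\in V(\varphi(-i-1),\varphi(-i))$ would fail. Establishing this arc-restriction rigorously will require a careful repeat of the $\sum_\dd \dd v_\dd = p^k\varphi_U(-k)-\varphi_U(0)$ calculation underlying Lemma 1.2 ii/ of \cite{bl}, applied at every position inside each piece.
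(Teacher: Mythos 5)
Your proposal is correct and follows the same strategy as the paper's proof: well-definedness of $B_\varphi$ via the decomposition of a minimal solution into irreducible pieces (as in the proof of Lemma \ref{suppmin}), surjectivity and the ``if'' direction of (i) via Corollary \ref{basepglue}, injectivity from uniqueness of base-$p$ expansions, and the ``only if'' direction of (i) as the contrapositive of well-definedness. The one place you flag as incomplete --- verifying that when a reducible minimal $U$ is split at a repeated support value, each piece's support map is the restriction of $\varphi_U$ to the matching arc of $\Z/m\Z$, not merely that the pieces are minimal --- is a genuine subtlety, but it is exactly what the calculation in the proof of Lemma \ref{gluemin} (via \cite[Lemma 1.2 ii/]{bl}) handles: the remainder of $u_{\dd}$ modulo $p^j$ determines $\varphi_U(-j)$ given $\varphi_U(0)$, so a piece whose truncations agree with $U$'s and whose value at $0$ matches inherits $\varphi_U$ position by position. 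The paper's own proof cites this same chain (``as in the proof of Lemma \ref{suppmin}'' and \cite[Lemma 1.2 ii/]{bl}) without spelling it out, so your level of detail is comparable; if anything you are more explicit than the paper in separating the irreducible case (handled cleanly by the shift $\delta_m^{-i}$ and Lemma \ref{mini}(iii)) from the reducible one.
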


\begin{proof}
First assume every set $V(\varphi(-i-1),\varphi(-i))$ is non empty; from the corollary above, we construct a minimal solution with support $\varphi$. Conversely, consider a minimal solution with support $\varphi$; from the decomposition of a minimal element in terms of irreducible ones, as in the proof of Lemma \ref{suppmin}, the map $B_\varphi$ is well defined, and we get an element in the set $\prod_{i=0}^{m-1} V(\varphi(-i-1),\varphi(-i))$ from \cite[Lemma 1.2 ii/]{bl}; thus none of the sets $V(\varphi(-i-1),\varphi(-i))$ is empty; this shows assertion (i).

To show assertion (ii), we just have to remark that the map from $\prod_{i=0}^{m-1} V(\varphi(-i-1),\varphi(-i))$ to $M_{D,p}(\varphi)$ sending $(v_{\dd,i})_{\d\in D, 0\leq i\leq m-1}$ to $(\sum_{i=0}^{m-1} p^i v_{\dd,i})_{\d\in D}$ is well defined from the corollary above. Moreover it is the inverse function of the map $B_\varphi$.
\end{proof}

\section{Congruences}
\label{secmain}

We fix a subset $D\subset \N^n$, not contained in any coordinate hyperplane, and a prime $p$. In the following $f$ is a polynomial having its exponents in $D$, to which we associate the series $F_1(\Gamma,\X)$ and $F_m(\Gamma,\X)$, and the matrices $A(\Gamma)_I$, $B(\Gamma)_I$ defined in the first section.

The aim of this section is to give a congruence for the Fredholm determinants of the matrices $A(\Gamma)_I$ defined above, and to deduce similar results for the $L$-functions. In order to do this, we examine the minors of the matrices $A_I$.

In the first two subsections, we consider the matrix $A(\Gamma)_{\{1,\ldots,n\}}$. In the first, we show that the entire function $\det(\I-TA(\Gamma)_I)$ lies in the ring $M_{\delta_p(D_I)}$. From this result, we consider this function modulo $I_\delta$ along the second subsection. We are lead to the study of the minimal solutions defined in the preceding section. From the factorization in Lemma \ref{exp} iv/, we have to consider the matrix $B_{\{1,\ldots,n\}}$; we show that the indices of the lines (and columns) of a ``minimal'' minor must lie in the $p$-minimal support of $D$ defined in the preceding section. Then we are able to give a congruence for Fredholm determinants, in Proposition \ref{congmanin}. 

At the end of the section, we use these results to prove Theorem \ref{Lcong}.

\subsection{The Fredholm determinant, and the minimal support}

In this subsection and the next one, we set $A:=A(\Gamma)_{\{1,\ldots,n\}}$ and $B:=B(\Gamma)_{\{1,\ldots,n\}}$ in the notations of Definition \ref{defNI}; we also drop the $\Gamma$ when no confusion can occur. Write $\det(\I-TA):=1+\sum_{s\geq 1} \ell^{(m)}_s T^s$, and $\det(\I-TB):=1+\sum_{s\geq 1} \ell^{(1)}_s T^s$. We also denote by $\delta:=\delta_p(D)$, $\Sigma:=\Sigma_{p}(D)$ and $N:=N_{p}(D)$ the $p$-density of $D$, $p$-minimal support of $D$ and the cardinality of this last set all along this section. Moreover we set $\Sigma:=\{\ee_1,\cdots,\ee_N\}$.

\medskip

Our aim is to get a congruence for the Fredholm determinant $\det(\I-TA)$. We begin by recalling some facts about the coefficients $\ell_s^{(m)}$: we shall decompose their principal parts as sums of terms that we link to the minimal solutions defined in the preceding section. The calculations are rather tedious, but the idea is simple: the coefficients $\ell_s^{(m)}$ are expressed in terms of the coefficients of the functions $F_m$ (and $F_1$ from the factorisation of $A$). In turn, these last coefficients can be written from the coefficients $\gamma_{\dd}$ of the lifting of the polynomial $f$, and the coefficients $\lambda_i^{(\cdot)}$ of Dwork's splitting functions $\theta_1$ and $\theta_m$. Finally, the valuations of the coefficients $\ell_s^{(m)}$ come from those of the $\lambda_i^{(\cdot)}$ (which are greater than or equal to $\sigma_p(i)$ from Lemma \ref{coeffssplit}), and a careful examination of their expression leads us to minimal elements in $E_{D,p}(mn)$ for some $n\leq s$.

Let $F$ be a non-empty subset of $(\N_{>0})^n$. We denote by $A[F]$ the matrix $(f^{(m)}_{q\ii-\jj})_{\ii,\jj\in F}$. We have the following expression for the coefficient $\ell_s^{(m)}$ in terms of the determinants of the matrices $A[F]$
$$\ell_s^{(m)}=\sum_{F\subset (\ma{N}_{>0})^n,~\# F=s} \det A[F].$$
From the definition of the determinant, we have, for $F=\{\uu_0,\dots,\uu_{s-1}\}$ (where as usual $S_s$ denotes the symmetric group over $s$ elements)

$$\det A[F]=\sum_{\sigma\in S_s} M_{F,\sigma},\quad M_{F,\sigma}:=\sgn(\sigma)\prod_{i=0}^{s-1} f^{(m)}_{q\uu_i- \uu_{\sigma(i)}},$$

We present another, less classical, expression for the determinant. It comes from the decomposition of permutations as products of disjoint cycles. Let us give some definitions in order to introduce it.

\begin{definition}
\label{isigma}
We denote by $\II_l$ the set of injections from $\Z/l\Z$ to $(\N_{>0})^n$. For $\theta\in \II_l$, we define the \emph{cyclic minor} associated to $\theta$ as $$M_\theta^{(m)}:=(-1)^{l-1}\prod_{i=0}^{l-1} f^{(m)}_{q\theta(i)-\theta(i+1)}.$$

Let $F$ be a non-empty subset of $(\N_{>0})^n$, with cardinality $s$. Define the finite set $\II(F):=\coprod_{k=1}^s \II_k(F)$, where $\II_k(F)$ is the set of injections from $\Z/k\Z$ to $F$. 

Let $\AA(F)$ consist of the subsets $\Theta:=\{\theta_1,\dots,\theta_{|\Theta|}\} \subset \II(F)$ such that 
\begin{itemize}
	\item[(i)] for each $i$, $\theta_i(0)=\min \Ima \theta_i$; 
	\item[(ii)] the $\Ima \theta_i$, $1\leq i\leq |\Theta|$, form a partition of $F$.
\end{itemize}
\end{definition}

From this new set, we can rewrite the determinant $\det A[F]$ in terms of cyclic minors

\begin{lemma}
\label{decdetcyc}
Notations being as in the definition above, we have the following expression for the determinant $\det A[F]$

$$\det A[F] = \sum_{\Theta \in \AA(F)} \prod_{i=1}^{|\Theta|} M_{\theta_i}^{(m)}.$$
\end{lemma}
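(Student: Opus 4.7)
The plan is to exploit the standard cycle decomposition of permutations and show that it gives a bijection between $S_s$ (indexing the sum in the usual Leibniz expansion of $\det A[F]$) and $\AA(F)$. The compatibility of signs and products will then make the two expressions equal term by term.

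First, I would fix $F=\{\uu_0,\dots,\uu_{s-1}\}$ and start from the Leibniz formula
\[
\det A[F]=\sum_{\sigma\in S_s}\sgn(\sigma)\prod_{i=0}^{s-1} f^{(m)}_{q\uu_i-\uu_{\sigma(i)}}.
\]
Given $\sigma\in S_s$, decompose it as a product of disjoint cycles on the set $F$. For each cycle $c$, let $l$ be its length and let $\uu\in F$ be the minimum (with respect to lexicographic order) of the elements it permutes; write the cycle starting from this minimum as $(\uu,\sigma(\uu),\sigma^2(\uu),\dots,\sigma^{l-1}(\uu))$ and define $\theta:\Z/l\Z\to F$ by $\theta(j):=\sigma^j(\uu)$. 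Then $\theta\in\II_l(F)$ with $\theta(0)=\min\Ima\theta$, and the collection $\Theta$ of such $\theta$'s, as $c$ ranges over the cycles of $\sigma$, is an element of $\AA(F)$. Conversely, any $\Theta\in\AA(F)$ determines a unique $\sigma$ by reading off the cycles, so the assignment $\sigma\mapsto\Theta$ is a bijection $S_s\longleftrightarrow \AA(F)$.

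Next I would check that under this bijection the summands coincide. If $\sigma$ corresponds to $\Theta=\{\theta_1,\dots,\theta_{|\Theta|}\}$ with cycle lengths $l_1,\dots,l_{|\Theta|}$, then $\sgn(\sigma)=\prod_{i=1}^{|\Theta|}(-1)^{l_i-1}$, and the product over $i\in\{0,\dots,s-1\}$ factors along the orbits of $\sigma$:
\[
\prod_{i=0}^{s-1} f^{(m)}_{q\uu_i-\uu_{\sigma(i)}}=\prod_{i=1}^{|\Theta|}\prod_{j=0}^{l_i-1} f^{(m)}_{q\theta_i(j)-\theta_i(j+1)}.
\]
Combining the sign and the product cycle-by-cycle gives exactly $\prod_{i=1}^{|\Theta|} M_{\theta_i}^{(m)}$ by the definition of the cyclic minor in Definition \ref{isigma}. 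Summing over $\sigma$, equivalently over $\Theta\in\AA(F)$, yields the claimed identity.

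The proof is essentially bookkeeping; the only potentially delicate point is verifying that the normalization $\theta(0)=\min\Ima\theta$ makes the correspondence $\sigma\mapsto\Theta$ a genuine bijection (without overcounting the cyclic rotations of each cycle), but this is immediate because each cycle on a finite ordered set has exactly one representative starting at its minimum.
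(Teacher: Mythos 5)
Your proof is correct and follows essentially the same route as the paper: decompose $\sigma$ into disjoint cycles, normalize each cycle to start at its minimum to get the bijection $S_s \leftrightarrow \AA(F)$, and verify that sign and product factor along the orbits. The paper phrases the bijection as $\sigma\mapsto\{g\circ\eta_1,\dots,g\circ\eta_{|\sigma|}\}$ with $g(i)=\uu_i$, but this is the same construction; your explicit check of $\sgn(\sigma)=\prod(-1)^{l_i-1}$ is a detail the paper simply asserts.
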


\begin{proof}
First recall that any $\sigma \in S_s$ can be written in a unique way, up to permutation, as $\sigma=\gamma_1\cdots \gamma_{|\sigma|}$ where the $\gamma_i$ are cycles whose supports form a partition of $\{0,\cdots,s-1\}$. Such a cycle $\gamma_i$ of length $s_i$ can be represented in a unique way as an injection $\eta_i$ from $\Z/s_i\Z$ to $\{0,\cdots,s-1\}$ such that $\eta_i(0)=\min \Ima \eta_i$ as $\gamma_i=(\eta_i(0)\cdots \eta_i(s_i-1))$. Thus the map $\sigma \mapsto \{\eta_1,\cdots,\eta_{|\sigma|}\}$ defines a bijection between the sets $S_s$ and $\AA(\{0,\cdots,s-1\})$.

Let $g$ be the bijection from $\{0,\cdots,s-1\}$ to $F$ sending $i$ to $\uu_i$; from what we have just said, the map $\sigma \mapsto \{g\circ \eta_1,\cdots,g \circ \eta_{|\sigma|}\}$ is a bijection from $S_s$ to $\AA(F)$. Now for any $\sigma\in S_s$ with image $\{\theta_1,\cdots,\theta_{|\sigma|}\}$ in $\AA(F)$, we have $M_{F,\sigma}=\prod_{i=1}^{|\sigma|} M_{\theta_i}^{(m)}$. This is the desired result.
\end{proof}

In order to give congruences for the cyclic minors, recall that we have the following expansion for the coefficients of the series $F_m(\Gamma,\X)$
\begin{equation}
\label{coeffm}
f_{\ii}^{(m)}(\Gamma)=\sum_{\sum \dd u_{\dd}=\ii} \prod_D \lambda_{u_{\dd}}^{(m)} \gamma_{\dd}^{u_{\dd}}
\end{equation}

We first show two important facts. On one hand, the valuations of the minors are bounded below by a linear function of their size, the coefficient being the density. On the other hand, when we look at the minors whose valuation attains this bound, we can restrict our attention to the cyclic minors whose support is contained in the $p$-minimal support of $D$.

\begin{lemma}
\label{supp1}
Choose some injection $\theta$ in $\II_l$. We have the following congruences in the ring $\Z_p[\zeta_p]$
\begin{itemize}
	\item[(i)] $M_\theta^{(m)}\equiv 0 \mod \pi^{ml(p-1)\delta}$, and
	\item[(ii)] $M_\theta^{(m)}\equiv 0 \mod \pi^{ml(p-1)\delta+1}$ when the image of $\theta$ is not contained in the $p$-minimal support of $D$.
\end{itemize}
\end{lemma}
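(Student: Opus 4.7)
The plan is to expand $M_\theta^{(m)}$ via formula~(\ref{coeffm}), bound the $\pi$-adic valuation of each resulting monomial using Lemma~\ref{coeffssplit}, and then package the combined exponent data into a single element $W\in E_{D,p}(ml)$ whose $p$-weight is controlled by $\delta$ through Lemma~\ref{qmin} and, in the equality case, by Lemma~\ref{suppmin}. Concretely, each factor $f^{(m)}_{q\theta(i)-\theta(i+1)}(\Gamma)$ of $M_\theta^{(m)}$ is a sum over $\#D$-tuples $U^{(i)}=(u^{(i)}_\dd)_{\dd\in D}$ satisfying $\sum_D \dd\,u^{(i)}_\dd = q\theta(i)-\theta(i+1)$; since each $\gamma_\dd$ is a unit and $v_\pi(\lambda^{(m)}_n)\geq s_p(n)$ holds unconditionally, every monomial of $M_\theta^{(m)}$ has valuation at least $\sum_{i=0}^{l-1} s_p(U^{(i)})$, and it suffices to bound this sum below.

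To do so I would introduce $W=(W_\dd)_\dd$ defined by $W_\dd:=\sum_{i=0}^{l-1} q^{l-1-i}u^{(i)}_\dd$. Using $\theta(l)=\theta(0)$, the sum $\sum_D \dd\,W_\dd$ telescopes to $(q^l-1)\theta(0)$, a coordinate-wise positive vector divisible by $p^{ml}-1$. Applying Lemma~\ref{qmin} with $r=ml$ yields $\sum_D s_p(W_\dd)\geq s_{D,p}(ml)\geq ml(p-1)\delta$, while sub-additivity of $s_p$ combined with $s_p(q^k x)=s_p(x)$ gives $\sum_D s_p(W_\dd)\leq \sum_i s_p(U^{(i)})$. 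Together, these two inequalities establish assertion~(i).

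For~(ii) I would argue by contraposition: assume some monomial of $M_\theta^{(m)}$ has valuation exactly $ml(p-1)\delta$ and deduce $\Ima\theta\subset\Sigma$. Equality forces each intermediate bound to saturate. The second clause of Lemma~\ref{coeffssplit} then compels $u^{(i)}_\dd<q$ for every $i,\dd$, which eliminates any interaction between the base-$q$ blocks forming $W_\dd$ and ensures $W_\dd<p^{ml}$; equality in sub-additivity means no carries, and equality in Lemma~\ref{qmin} makes $W$ a genuine minimal element of $E_{D,p}(ml)$ with $\varphi_W(0)=\theta(0)$. Since the shift $\delta_{ml}^m$ acts on $W_\dd$ as multiplication by $q$ modulo $p^{ml}-1$ and hence cyclically permutes the base-$q$ blocks of $W_\dd$, the same telescoping identity applied after cyclically relabelling $\theta$ gives $\varphi_W(jm)=\theta(j)$ for all $0\leq j<l$. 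Hence $\Ima\theta\subset\Ima\varphi_W$, and Lemma~\ref{suppmin} yields $\Ima\varphi_W\subset\Sigma$, which contradicts the hypothesis of~(ii).

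The principal obstacle is precisely this equality analysis: one has to extract three separate ``no slack'' conditions (the bound $u^{(i)}_\dd<q$, the absence of carries when $W$ is assembled, and the minimality of $W$ in $E_{D,p}(ml)$) and then translate the cyclic $p$-digit shift acting on $W$ back into combinatorial information about the original injection $\theta$. Once those conditions are in place, the cycle structure of $\delta_{ml}^m$ combined with Lemma~\ref{suppmin} makes the conclusion immediate; everything else reduces to routine telescoping.
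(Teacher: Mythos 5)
Your proposal is correct and follows essentially the same route as the paper's proof: expand $M_\theta^{(m)}$ via formula~(\ref{coeffm}), bound each monomial's valuation through Lemma~\ref{coeffssplit}, assemble the exponents into $W_\dd=\sum_i q^{l-1-i}u_\dd^{(i)}$, telescope to get $\sum_D \dd W_\dd=(q^l-1)\theta(0)$, invoke Lemma~\ref{qmin} for part~(i), and extract the saturation conditions (no $u^{(i)}_\dd\geq q$, no carries, minimality of $W$) plus $\varphi_W(jm)=\theta(j)$ and Lemma~\ref{suppmin} for part~(ii). The only cosmetic difference is that the paper justifies $\varphi_W(jm)=\theta(j)$ by citing \cite[Lemma 1.2 ii/]{bl}, whereas you argue it directly from the fact that $\delta_{ml}^m$ acts as multiplication by $q$ modulo $p^{ml}-1$ and cyclically permutes the base-$q$ blocks, which is a clean, self-contained way of saying the same thing.
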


\begin{proof}
From the expression of $M_\theta^{(m)}$ and (\ref{coeffm}), we can write $M_\theta^{(m)}$ as a sum of terms of the form
$$A_{(u_{\dd}^{i})}= (-1)^{l-1}\prod_{i=0}^{l-1} \prod_D \lambda_{u^{i}_{\dd}}^{(m)} \gamma_d^{u^{i}_{\dd}},$$

where the $(u_{\dd}^i)_{\dd\in D,~0\leq i\leq l-1}$ satisfy $\sum_D du_{\dd}^{i}=q\theta(i)-\theta(i+1)$ for each $i$. From Lemma \ref{coeffssplit}, the valuation of such a term satisfies

\begin{equation}
\label{valq}
v(A_{(u_{\dd}^{i})})\geq \sum_{i=0}^{l-1} \sum_D s_p(u_{\dd}^{i}),
\end{equation}
with equality if and only if we have $0\leq u_{\dd}^{i}\leq q-1$ for any $i,\dd$.

For each $\dd$ in $D$, set $u_{\dd}=\sum_{i=0}^{l-1} q^{l-1-i} u_{\dd}^{i}$. A rapid calculation gives the equality
\begin{equation}
\label{equa}
\sum_D \dd u_{\dd}=(q^l-1)\theta(0).
\end{equation}
By the way we defined the integers $u_{\dd}$, we have the inequality $\sum_{i,\dd} s_p(u_{\dd}^{(i)})\geq \sum_D s_p(u_{\dd})$, and from Lemma \ref{qmin}, the inequality $\sum_D s_p(u_{\dd})\geq ml(p-1)\delta$. Together with equation (\ref{valq}), this proves assertion (i).

Assume that $v(A_{(u_{\dd}^{i})})=ml(p-1)\delta$. Then the three inequalities above are equalities. But equality in (\ref{valq}) implies the second equality, and that $0\leq u_{\dd}\leq p^{ml}-1$ for each $\dd$. Thus $U:=(u_{\dd})$ is a solution in $E_{D,p}(ml)$ from (\ref{equa}). Equality for the third gives that $U$ is a minimal solution in $E_{D,p}(ml)$. Finally, from the definition of $u_{\dd}$ and \cite[Lemma 1.2 ii/]{bl}, we have for any $i$ that
$$\varphi_U(mi)=\theta(i),~i \in \Z/l\Z,$$
and $\Ima \theta \subset \Ima \varphi_U$. The second assertion now follows from Lemma \ref{suppmin}.
\end{proof}

We give a first congruence for the Fredholm determinant.
  
\begin{lemma}
\label{supp2}
Let $F\subset (\N_{>0})^n$, with cardinality $s$. Then we have the congruences
\begin{itemize}
  \item[(i)] for any $s\geq 1$, we have $\ell_s^{(m)}\equiv 0 \mod \pi^{ms(p-1)\delta}$; as a consequence, the series $\det(\I-TA)$ is in $M_\delta$;
	\item[(ii)] $\det A[F]\equiv 0 \mod \pi^{ms(p-1)\delta+1}$ if $F$ is not contained in the minimal support $\Sigma$;
	\item[(iii)] in the ring $M_\delta$, we have
	$$\det(\I-T A)\equiv \det(\I_N-T A[\Sigma]) \mod I_\delta$$.
\end{itemize}
\end{lemma}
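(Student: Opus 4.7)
The plan is to derive all three parts of Lemma \ref{supp2} directly from the valuation bounds of Lemma \ref{supp1}, combined with the cyclic decomposition of the determinant provided by Lemma \ref{decdetcyc}. The guiding observation is that for any partition $\Theta \in \AA(F)$ of $F$ into cycles, the lengths $l_i$ of the injections $\theta_i$ satisfy $\sum_i l_i = |F| = s$, so the bound $v_\pi(M_{\theta_i}^{(m)}) \geq m l_i (p-1)\delta$ furnished by Lemma \ref{supp1}(i) automatically yields $v_\pi\bigl(\prod_i M_{\theta_i}^{(m)}\bigr) \geq ms(p-1)\delta$. This single telescoping is what powers all three assertions.

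For (i), I would start from $\ell_s^{(m)} = \sum_{|F|=s} \det A[F]$ and expand each determinant via Lemma \ref{decdetcyc}, obtaining a (convergent, by nuclearity of $A$) sum of products of cyclic minors. The uniform lower bound above gives $v_\pi(\ell_s^{(m)}) \geq ms(p-1)\delta$; translating via $v_\pi = m(p-1) v_q$ yields $v_q(\ell_s^{(m)}) \geq s\delta$, which is exactly the membership condition for $M_\delta$.

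For (ii), suppose $F \not\subset \Sigma$ and pick $\uu \in F \setminus \Sigma$. For every $\Theta \in \AA(F)$, the element $\uu$ lies in $\Ima \theta_{i_0}$ for a unique $i_0$, so $\Ima \theta_{i_0} \not\subset \Sigma$. Applying Lemma \ref{supp1}(ii) to $\theta_{i_0}$ and Lemma \ref{supp1}(i) to the remaining $\theta_i$ upgrades the bound on that particular product to $v_\pi \geq ms(p-1)\delta + 1$; summing over $\Theta \in \AA(F)$ transfers the same bound to $\det A[F]$. For (iii), the $T^s$ coefficient of $\det(\I - TA) - \det(\I_N - TA[\Sigma])$ equals $\sum_{|F| = s,\, F \not\subset \Sigma} \det A[F]$ (the finite sum over $F \subset \Sigma$ of cardinality $s$ reproduces the $N \times N$ Fredholm determinant, and is empty for $s > N$). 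By (ii) this difference has $v_q$ strictly greater than $s\delta$, so lies in $I_\delta$.

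The only step that would require real work has already been done in Lemma \ref{supp1}, which packages the combinatorial link between cyclic minors and minimal solutions in $E_{D,p}(ml)$; from there, the content of Lemma \ref{supp2} is essentially bookkeeping. The small points I would make sure to write carefully are the legitimacy of swapping the sums $\sum_F \sum_\Theta$ (guaranteed by nuclearity, since every term has $\pi$-valuation at least $ms(p-1)\delta$ and the Fredholm series converges), and the conversion between the $\pi$-adic condition $v_\pi > ms(p-1)\delta$ and the $q$-adic condition $v_q > s\delta$ that defines $I_\delta$.
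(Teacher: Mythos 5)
Your proposal is correct and follows essentially the same route as the paper: expand $\det A[F]$ via Lemma \ref{decdetcyc} into products of cyclic minors, apply the telescoping $\sum_i l_i = s$ together with Lemma \ref{supp1}(i) for assertion (i), upgrade one factor via Lemma \ref{supp1}(ii) when $F \not\subset \Sigma$ for (ii), and note that the $T^s$-coefficient of the difference in (iii) is exactly the sum of $\det A[F]$ over $F \not\subset \Sigma$. The only additions beyond the paper's argument are the explicit conversion $v_\pi = m(p-1)\,v_q$ and the remark on convergence, both of which the paper leaves implicit.
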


\begin{proof}
Recall from Lemma \ref{decdetcyc} that we can write $\det A[F]$ from cyclic minors, as a sums of terms $M_{\theta_1}^{(m)}\cdots M_{\theta_k}^{(m)}$, with the $\Ima \theta_i$ pairwise disjoint and $\coprod\Ima \theta_i =F$.

From Lemma \ref{supp1} (i), we get the inequality $v(M_{\theta_i}^{(m)}) \geq  ms_i(p-1)\delta$, where we have set $s_i:=\# \Ima \theta_i$. We get assertion (i) since $s=s_1+\cdots+s_k$, and the coefficient $\ell_s^{(m)}$ is the sum of the $\det A[F]$ when $F$ describes the subsets of $(\N_{>0})^n$ with cardinality $s$.

If $F\nsubseteq \Sigma$, there exists at least one $\theta_i$ such that $\Ima \theta_i\nsubseteq \Sigma$, and from Lemma \ref{supp1} (ii), we get the inequality $v(M_{\theta_i}^{(m)}) \geq  ms_i(p-1)\delta+1$; reasoning as above, we get assertion (ii).

As a consequence, in the ring $M_\delta$, the only terms in the Fredholm determinant remaining after reduction modulo $I_\delta$ are those coming from principal minors whose support is contained in $\Sigma$. This gives the congruence in the power series ring $M_\delta$.
\end{proof}

\subsection{The congruence for the Fredholm determinants}

We shall now use the factorisation of the matrix $A$ in terms of $B$: recall from Lemma \ref{exp} (iv) that we have $A=B^{\tau^{m-1}}\cdots B$, and that $B$ is the matrix $(f^{(1)}_{p\ii-\jj})_{\ii,\jj>0}$. As above we begin by considering cyclic minors. From Lemma \ref{supp1}, we choose some injection $\theta\in \II_l$ whose image is contained in $\Sigma$, and let $M_\theta^{(m)}$ be as above. From \cite[Lemma 3.2]{bf} and the factorization above we can write

$$M_\theta^{(m)}=(-1)^{l-1}\sum_{(\theta_1,\cdots,\theta_{m-1})\in \II_l^{m-1}} \prod_{i=0}^{l-1} \prod_{j=0}^{m-1} \left(f_{p\theta_j(i)-\theta_{j+1}(i)}^{(1)}\right)^{\tau^{m-1-j}}.$$

where for each $i$, we have set $\theta_0(i):=\theta(i)$ and $\theta_m(i):=\theta(i+1)$.

The following result is similar to Lemma \ref{supp1}; it ensures that in the expression above, we can restrict our attention to the terms such that all injections $\theta_i$ have their image in the $p$-minimal support of $D$. 

\begin{lemma}
\label{supp2bis}
Notations are as above. Assume that for some $j$ we have $\Ima \theta_j \nsubseteq \Sigma$; then we have the congruence
$$\prod_{i=0}^{l-1} \prod_{j=0}^{m-1} \left(f_{p\theta_j(i)-\theta_{j+1}(i)}^{(1)}\right)^{\tau^{m-1-j}} \equiv 0 \mod \pi^{ml(p-1)\delta+1}.$$
\end{lemma}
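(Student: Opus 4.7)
The plan is to mirror the proof of Lemma \ref{supp1}, but with one extra level of granularity: where Lemma \ref{supp1} tracked the coefficients of $F_m$, we now track those of $F_1$ via the factorisation $A=B^{\tau^{m-1}}\cdots B$ already used to expand $M_\theta^{(m)}$ in the preceding display. The goal is to attach, to any monomial in the expansion realising the lower bound $ml(p-1)\delta$, a \emph{single} minimal solution $U\in E_{D,p}(ml)$ whose support contains every value $\theta_j(i)$; Lemma \ref{suppmin} then forces all $\Ima\theta_j$ to lie in $\Sigma$, contradicting the hypothesis.

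First I would expand each factor $f^{(1)}_{p\theta_j(i)-\theta_{j+1}(i)}(\Gamma)$ as $\sum\prod_{\dd}\lambda^{(1)}_{u^{(i,j)}_{\dd}}\gamma_{\dd}^{u^{(i,j)}_{\dd}}$, the sum being taken over nonnegative tuples $(u^{(i,j)}_{\dd})_{\dd}$ with $\sum_{\dd}\dd\,u^{(i,j)}_{\dd}=p\theta_j(i)-\theta_{j+1}(i)$. Lemma \ref{coeffssplit} applied with $m=1$ gives $v(\lambda^{(1)}_n)\geq s_p(n)$, with equality if and only if $n\leq p-1$. Distributing over all $(i,j)$ and all $\dd$, one fixed term $A_{(u^{(i,j)}_{\dd})}$ in the expansion of the full product satisfies
\[
v\!\left(A_{(u^{(i,j)}_{\dd})}\right)\;\geq\;\sum_{i=0}^{l-1}\sum_{j=0}^{m-1}\sum_{\dd}s_p\!\left(u^{(i,j)}_{\dd}\right).
\]
Next I would aggregate the tuples into a single $U=(U_{\dd})_{\dd}$ by
\[
U_{\dd}:=\sum_{i=0}^{l-1}\sum_{j=0}^{m-1}p^{\,ml-1-im-j}\,u^{(i,j)}_{\dd}.
\]
A telescoping calculation (first in $j$, using the boundary identifications $\theta_0=\theta$ and $\theta_m(i)=\theta(i+1)$, then in $i$, using $\theta(l)=\theta(0)$) yields $\sum_{\dd}\dd\,U_{\dd}=(p^{ml}-1)\theta(0)$. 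Since $\theta(0)\in(\N_{>0})^n$, Lemma \ref{qmin} applies to $U$ and gives $\sum_{\dd}s_p(U_{\dd})\geq s_{D,p}(ml)\geq ml(p-1)\delta$. Combining with the sub-additivity estimate $s_p(U_{\dd})\leq\sum_{i,j}s_p(u^{(i,j)}_{\dd})$, this reproves the bound $v(A_{(u^{(i,j)}_{\dd})})\geq ml(p-1)\delta$.

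Finally I examine the equality case. If $v(A_{(u^{(i,j)}_{\dd})})=ml(p-1)\delta$, then every valuation $v(\lambda^{(1)}_{u^{(i,j)}_{\dd}})$ must be exact, which forces $0\leq u^{(i,j)}_{\dd}\leq p-1$ for all $(i,j,\dd)$. The base-$p$ digits of $U_{\dd}$ therefore occupy the distinct positions $ml-1-im-j$ without overlap, so $U_{\dd}\leq p^{ml}-1$, the sub-additivity is an equality, and $U$ becomes a \emph{minimal} element of $E_{D,p}(ml)$. At this stage $U$ has exactly the shape of the element constructed in Lemma \ref{supp1}'s proof, but with $q$ replaced by $p$, length $l$ by $ml$, and $\theta$ by the cyclic map $\Theta\colon\Z/ml\Z\to(\N_{>0})^n$, $mi+j\mapsto\theta_j(i)$. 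The same argument (ultimately \cite[Lemma 1.2 ii]{bl}) then identifies $\varphi_U(mi+j)=\theta_j(i)$ for all $(i,j)$, whence every $\theta_j(i)$ lies in $\Ima\varphi_U\subset\Sigma$ by Lemma \ref{suppmin}, for every $j$. This contradicts the hypothesis $\Ima\theta_j\nsubseteq\Sigma$, so no term of the expansion reaches the bound $ml(p-1)\delta$, and the whole product has valuation $\geq ml(p-1)\delta+1$.

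The main obstacle is the identification $\varphi_U(mi+j)=\theta_j(i)$: Lemma \ref{supp1}'s proof was written in base $q$ and only yielded values of $\varphi_U$ at multiples of $m$, whereas here we need the finer base-$p$ information at every index in $\Z/ml\Z$. One has to recompute carefully, checking that the contributions $u^{(i,j)}_{\dd}$ really do sit at the advertised digit positions (which requires the single-digit condition $u^{(i,j)}_{\dd}\leq p-1$ already secured by the equality case) and matching this with the cyclic formulation given by $\Theta$. The telescoping computation of $\sum_{\dd}\dd\,U_{\dd}$ also needs some care because of the two interacting indices $i$ and $j$ and the asymmetric boundary condition between $\theta_0$ and $\theta_m$.
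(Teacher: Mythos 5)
Your proposal is correct and follows essentially the same approach as the paper's proof: expand each $f^{(1)}$ coefficient via Lemma \ref{coeffssplit}, aggregate the per-digit data into a single $U\in E_{D,p}(ml)$ (your exponent $ml-1-im-j$ agrees with the paper's $q^{l-1-i}p^{m-1-j}$ since $q=p^m$), use Lemma \ref{qmin} and the equality case of the valuation bound to see that a minimum-valuation term yields a minimal solution, and then apply \cite[Lemma 1.2 ii]{bl} together with Lemma \ref{suppmin} to force $\Ima\theta_j\subset\Sigma$. Your flagged concern about verifying $\varphi_U(mi+j)=\theta_j(i)$ at the fine base-$p$ scale is exactly the point the paper handles by citing \cite[Lemma 1.2 ii]{bl}.
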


\begin{proof}
We have the following expression for the coefficients of the series $F_1$

$$f_{\nn}^{(1)}=\sum_{\sum {\dd}u_{\dd}=\nn} \prod_D \lambda_{u_{\dd}}^{(1)} \gamma_{\dd}^{u_{\dd}}.$$

 Thus $M_\theta^{(m)}$ can be written as a sum of terms of the form

$$A_{(u_{\dd}^{ij})}= \prod_{i=0}^{l-1} \prod_{j=0}^{m-1}\left(\prod_D \lambda_{u^{ij}_{\dd}}^{(1)} \gamma_{\dd}^{u^{ij}_{\dd}}\right)^{\tau^{m-1-j}},$$

where for each $i,j$ we have $\sum_D {\dd}u_{\dd}^{ij}=p\theta_j(i)-\theta_{j+1}(i)$. From Lemma \ref{coeffssplit}, the valuation of such a term satisfies

$$v(A_{(u_{\dd}^{ij})})\geq \sum_{i=0}^{l-1} \sum_{j=0}^{m-1}\sum_D s_p(u_{\dd}^{ij}),$$
with equality if and only if we have $0\leq u_{\dd}^{ij}\leq p-1$ for any $i,j,{\dd}$. 

For each ${\dd}$, set $u_{\dd}=\sum_{i=0}^{l-1} q^{l-1-i} \sum_{j=0}^{m-1} p^{m-1-j} u_{\dd}^{ij}$. As in the proof of Lemma \ref{supp1}, we have
$$\sum_D {\dd}u_{\dd}=(q^l-1)\theta(0).$$
Assume the equality $v(A_{(u_{\dd}^{ij})})=ml(p-1)\delta$. As above, it implies that $U:=(u_{\dd})$ is a minimal solution in $E_{D,p}(ml)$. From \cite[Lemma 1.2 ii/]{bl} we have $\theta_j(i)=\varphi_U(mi+j)$, and Lemma \ref{suppmin} ensures that $\Ima \theta_j$ is contained in the minimal support $\Sigma$. 

As a consequence, if for some $0\leq j\leq m-1$, we have $\Ima \theta_j \nsubseteq \Sigma$, then 
$$\prod_{i=0}^{l-1} \prod_{j=0}^{m-1} \left(f_{p\theta_j(i)-\theta_{j+1}(i)}^{(1)}\right)^{\tau^{m-1-j}} \equiv 0 \mod \pi^{ml(p-1)\delta+1}.$$
\end{proof}

In order to give the congruence, we introduce a matrix from the base $p$ digits of the minimal solutions associated to $D$ and $p$ (see Definition \ref{basepdigit}).

\begin{definition}
For any elements $\ee,\ee'\in \Sigma$, set 
$$m_{\ee,\ee'}(\Gamma):=\sum_{V=(v_\dd)_D\in V(\ee,\ee')} \prod_D \frac{\gamma_{\dd}^{v_{\dd}}}{v_{\dd}!}$$
Denote by $M(\Gamma)$ the $N\times N$ matrix whose coefficients are the $m_{\ee_i\ee_j}(\Gamma)$.
\end{definition}

With this at hand, the main result of this section is

\begin{proposition}
\label{congmanin}
In the ring $M_\delta$, we have the congruence

$$  \det(\I-TA)\equiv \det\left(\I_N-M(\Gamma)^{\tau^{m-1}}\cdots M(\Gamma)(\pi^{m(p-1)\delta}T)\right)  \mod I_{\delta}    $$

\end{proposition}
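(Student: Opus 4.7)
The plan is to combine Lemma \ref{supp2}(iii) with a refined analysis of the cyclic minors of $A[\Sigma]$, using the factorisation $A=B^{\tau^{m-1}}\cdots B$ from Lemma \ref{exp}(iv) to replace each block of $B[\Sigma]$ by its principal part expressed in terms of $M(\Gamma)$. Lemma \ref{supp2}(iii) already gives $\det(\I-TA)\equiv \det(\I_N-TA[\Sigma])\mod I_\delta$, so it suffices to establish
$$\det(\I_N-TA[\Sigma])\equiv \det\!\left(\I_N-\pi^{m(p-1)\delta}T\,M(\Gamma)^{\tau^{m-1}}\cdots M(\Gamma)\right)\mod I_\delta.$$
Expanding each side in $T^s$ and applying the cyclic decomposition of Lemma \ref{decdetcyc} over decompositions $\Theta\in\AA(F)$ of subsets $F\subset\Sigma$ of size $s$, this reduces to proving, for every injection $\theta:\Z/l\Z\to \Sigma$,
$$M_\theta^{(m)}\equiv \pi^{ml(p-1)\delta}M_\theta^{(C)}\mod \pi^{ml(p-1)\delta+1},$$
where $C:=M(\Gamma)^{\tau^{m-1}}\cdots M(\Gamma)$ and $M_\theta^{(C)}:=(-1)^{l-1}\prod_{i}C_{\theta(i),\theta(i+1)}$ is the cyclic minor of $C$ attached to $\theta$.

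To establish this last congruence, I would first expand $A_{\theta(i),\theta(i+1)}=(B^{\tau^{m-1)}}\cdots B)_{\theta(i),\theta(i+1)}$ as a sum over paths $(\vv_i^0,\ldots,\vv_i^m)$ with $\vv_i^0=\theta(i+1)$ and $\vv_i^m=\theta(i)$, and invoke Lemma \ref{supp2bis} to restrict, modulo $\pi^{ml(p-1)\delta+1}$, to those configurations with every intermediate $\vv_i^j\in \Sigma$. Next, I would expand each single-step block $B[\Sigma]_{\vv_i^{j+1},\vv_i^j}=f^{(1)}_{p\vv_i^{j+1}-\vv_i^j}$ via the definition of $F_1(\Gamma,\X)$. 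By Lemma \ref{coeffssplit}, any digit tuple $(u_\dd)$ with some $u_\dd\geq p$ picks up an additional $\pi^{p-1}$ and can be discarded modulo $\pi^{ml(p-1)\delta+1}$; so only the contributions of tuples $V=(v_\dd)\in V(\vv_i^{j+1},\vv_i^j)$ survive. Substituting the leading part $\lambda^{(1)}_{v_\dd}\equiv \pi^{v_\dd}/v_\dd!$ and summing over $V\in V(\vv_i^{j+1},\vv_i^j)$ converts each block into $\pi^{w(\vv_i^{j+1},\vv_i^j)}\,m_{\vv_i^{j+1},\vv_i^j}(\Gamma)^{\tau^{m-1-j}}$.

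The crux of the argument is the weight identity
$$\sum_{i=0}^{l-1}\sum_{j=0}^{m-1}w(\vv_i^{j+1},\vv_i^j)=ml(p-1)\delta,$$
valid for every closed loop $(\vv_i^j)$ in $\Sigma$ along which all the sets $V(\vv_i^{j+1},\vv_i^j)$ are non-empty. This is the main obstacle: it does not hold edge by edge (the individual weights $w(\ee,\ee')$ do vary), only once one sums around a closed loop of length $ml$. The identity is exactly what Corollary \ref{basepglue} provides: concatenating arbitrary choices $V^{i,j}\in V(\vv_i^{j+1},\vv_i^j)$ produces a minimal element $U\in E_{D,p}(ml)$, whose $p$-weight equals $ml(p-1)\delta$ by definition of the density, and this weight is precisely the sum above (this last bookkeeping uses Proposition \ref{decbasep} to identify the digits with the chosen $V^{i,j}$). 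Configurations in which some $V(\vv_i^{j+1},\vv_i^j)$ is empty contribute zero on both sides, since $m_{\ee,\ee'}(\Gamma)$ is an empty sum whenever $V(\ee,\ee')=\emptyset$. Factoring the now-uniform $\pi^{ml(p-1)\delta}$ out of every surviving term matches the expansion precisely with that of $M_\theta^{(C)}$, yielding the cyclic-minor congruence; summing over $\Theta\in\AA(F)$ and over subsets $F\subset\Sigma$ of size $s$ then gives the proposition.
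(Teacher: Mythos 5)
Your proof is correct and takes essentially the same route as the paper: the cyclic-minor decomposition of Lemma \ref{decdetcyc}, restriction of the supports to $\Sigma$ via Lemmas \ref{supp1} and \ref{supp2bis}, the factorisation $A=B^{\tau^{m-1}}\cdots B$, discarding digits $\geq p$ by Lemma \ref{coeffssplit} and non-minimal concatenations by Lemma \ref{qmin}, and finally the identification of surviving digit tuples with $\prod V(\cdot,\cdot)$ via Proposition \ref{decbasep}. The paper phrases the key step as imposing minimality of the concatenated solution as a side condition on the digit tuples, whereas you phrase it as the ``closed-loop weight identity'' supplied by Corollary \ref{basepglue}; these are equivalent, and the latter framing arguably makes clearer why the block-by-block substitution of $\pi^{w(\ee,\ee')}m_{\ee,\ee'}(\Gamma)$ is legitimate despite the individual weights $w(\ee,\ee')$ varying.
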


\begin{proof}
We first use the decomposition from Lemma \ref{decdetcyc}; it expresses the coefficients of the Fredholm determinants in terms of cyclic minors. Moreover from Lemma \ref{supp1}, when we consider the Fredholm determinant $\det(\I-TA)$ modulo $I_{m\delta}$, we only have to look at the cyclic minors of $A$ having support in $\Sigma$. Fix some injection $\theta:\Z/l\Z \rightarrow \Sigma$, and let $M_\theta$ and $N_\theta$ denote the cyclic minors respectively asociated to the matrices $A[\Sigma]$ and $M(\Gamma)^{\tau^{m-1}}\cdots M(\Gamma)$; we are reduced to show the congruence
$$M_\theta \equiv N_\theta \pi^{lm(p-1)\delta} \mod \pi^{lm(p-1)\delta+1}.$$
We now refine the decomposition of the minor $M_\theta$, using the factorization of $A$. From \cite[Lemma 3.2]{bf} and Lemma \ref{supp2bis}, we have to introduce the set $\II_l(\Sigma)$ of injections from $\Z/l\Z$ to $\Sigma$; reasoning as in the proof of this last Lemma, we get the congruence

$$M_\theta \equiv (-1)^{l-1} \sum\sum \prod_{i=0}^{l-1}\prod_{j=0}^{m-1} \left( \prod_D \lambda^{(1)}_{u_{\dd}^{ij}} \gamma_{\dd}^{u_{\dd}^{ij}}\right)^{\tau^{m-1-j}} \mod \pi^{lm(p-1)\delta+1},$$ 
where the first sum is over $(\theta_1,\cdots,\theta_{m-1})$ in $\II_l(\Sigma)^{m-1}$, and the second over the $(u_{\dd}^{ij})$ indexed by $D\times \Z/l\Z\times \{0,\ldots,m-1\}$, such that 

\begin{itemize}
	\item[(i)] $0\leq u_{\dd}^{ij} \leq p-1$,
	\item[(ii)]  $\sum_D \dd u_{\dd}^{ij}=p\theta_j(i)-\theta_{j+1}(i)$ (with $\theta_0(i)=\theta(i)$, and $\theta_m(i)=\theta(i+1)$);
	\item[(iii)] if we set $u_\dd:=\sum_{i=0}^{l-1} q^{l-1-i}\sum_{j=0}^{m-1} p^{m-1-j} u_{\dd}^{ij}$, then $(u_{\dd})_D\in E_{D,p}(ml)$ is minimal.
\end{itemize}

From (iii), we have $\sum_{i=0}^{l-1}\sum_{j=0}^{m-1} u_{\dd}^{ij}=ml(p-1)\delta$; from (i), and the description of the coefficients $\lambda^{(1)}_i$ for $0\leq i\leq p-1$, we get
$$M_\theta \equiv (-1)^{l-1} \sum\sum \prod_{i=0}^{l-1}\prod_{j=0}^{m-1} \left( \prod_D  \frac{\gamma_{\dd}^{u_{\dd}^{ij}}}{u_{\dd}^{ij}!}\right)^{\tau^{m-1-j}} \pi^{lm(p-1)\delta} \mod \pi^{lm(p-1)\delta+1}.$$ 
From any element $(u_{\dd}^{ij})$ in the second sum, we have constructed a minimal $(u_{\dd})$, whose support is $\varphi$ defined by $\varphi(mi+j)=\theta_j(i)$ from (ii). This is an element of $M(\varphi)$. Conversely, for any $U\in M(\varphi)$, its base $p$ digits satisfy (i), (ii) and (iii) above. 

Thus the second sum is exactly over the base $p$ digits of elements of $M(\varphi)$, and using the bijection $B_\varphi$ in Proposition \ref{decbasep}, we conclude that the second sum is over the set $\prod_{i=0}^{l-1}\prod_{j=0}^{m-1} V(\theta_j(i),\theta_{j+1}(i))$.

It remains to write the cyclic minor $N_\theta$; we use once again \cite[Lemma 3.2]{bf}, and the definition of the matrix as the product $M(\Gamma)^{\tau^{m-1}}\cdots M(\Gamma)$; we get the expression

$$N_\theta=(-1)^{l-1} \sum \prod_{i=0}^{l-1}\prod_{j=0}^{m-1} \left( m_{\theta_j(i),\theta_{j+1}(i)}\right)^{\tau^{m-1-j}},$$

with the sum over $(\theta_1,\cdots,\theta_{m-1})$ in $\II_l(\Sigma)^{m-1}$. Finally we use the description of the coefficients of the matrix $M(\Gamma)$, and we get the expression 
$$N_\theta = (-1)^{l-1} \sum\sum \prod_{i=0}^{l-1}\prod_{j=0}^{m-1} \left( \prod_D  \frac{\gamma_{\dd}^{u_{\dd}^{ij}}}{u_{\dd}^{ij}!}\right)^{\tau^{m-1-j}},$$
where the second sum is over the product $\prod_{i=0}^{l-1}\prod_{j=0}^{m-1} V(\theta_j(i),\theta_{j+1}(i))$. This is the desired result.

\end{proof}

\subsection{The congruence}
\label{zeroes}

We conclude this section by proving Theorem \ref{Lcong}, and giving one of its consequences.

\begin{proof}[Proof of Theorem \ref{Lcong}]
From Lemma \ref{exp} (iii), the function $L(\A^n,f;T)$ is the (alternating) product of the Fredholm determinants $\det(\I-q^{n-k}TA(\Gamma)_I)^{(-1)^{k+1}}$, where $I$ is any subset of $\{1,\cdots,n\}$, $k\leq \# I$, and the above factor appears $\binom{\# I}{k}$ times. From Lemma \ref{supp2}, the series $\det(\I-q^{n-k}TA(\Gamma)_I)$ lies in $M_{\delta_I+n-k}$, which is contained in $M_{\delta}$ from Lemma \ref{DetDI}. Moreover it is invertible in any of these rings, and conguent to $1$ modulo $I_{\delta}$ as long as $\delta_J+n-k>\delta$. Again from Lemma \ref{DetDI}, this happens exactly when $k<\# I$, or $\delta_p(D_I)+n-\# I>\delta$. Thus we get the congruence

$$L(\A^n,f;T)\equiv \prod \det(\I-q^{n-\# I}TA(\Gamma)_I)^{(-1)^{\# I +1}} \mod I_{\delta}$$
where the product is over those $I$ such that $\delta_p(D_I)+n-\#I=\delta_p(D)$. The result is now a direct consequence of Proposition \ref{congmanin}.
\end{proof}

We emphasize a particular case, when we have $\delta_p(D_I)+n-\#I>\delta_p(D)$ for any subset $I\subset \{1,\ldots,n\}$. This is often verified, for instance when $n=1$, or when the set $D$ contains an element with all coordinates positive.

\begin{corollary}
Assume that we have $\delta_p(D_I)+n-\#I>\delta_p(D)$ for any subset $I\subset \{1,\ldots,n\}$; then we have the following congruence in the ring $M_{\delta}$
$$L(\A^n,f;T)^{(-1)^{n+1}}\equiv  \det\left(\I_N-M(\Gamma)^{\tau^{m-1}}\cdots M(\Gamma)(\pi^{m(p-1)\delta}T)\right)  \mod I_{\delta}$$
\end{corollary}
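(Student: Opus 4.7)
This corollary is a direct specialization of Theorem~\ref{Lcong}; the plan is to observe that under the stated hypothesis only the subset $I=\{1,\ldots,n\}$ contributes to the product in that theorem, then to invert the resulting congruence.

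First, I would note that the equality $\delta_p(D_I)+n-\#I=\delta_p(D)$ always holds for $I=\{1,\ldots,n\}$, since $D_{\{1,\ldots,n\}}=D$, and that the hypothesis precisely excludes every other $I$ from the index set of the product in Theorem~\ref{Lcong}. For this unique remaining $I$ we have $\Gamma_I=\Gamma$, $N_I=N$, and $M(\Gamma_I)=M(\Gamma)$, so the theorem specialises to
$$L(\A^n,f;T)\equiv \det\left(\I_N-\pi^{m(p-1)\delta}TM(\Gamma)^{\tau^{m-1}}\cdots M(\Gamma)\right)^{(-1)^{n+1}} \mod I_\delta.$$

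Second, I would raise both sides to the power $(-1)^{n+1}$. For $n$ odd this is vacuous; the only genuine case is $n$ even, where one must invert. Both sides are power series in $M_\delta$ with constant term $1$. Any $F=1+\sum_{i\geq 1}a_iT^i$ with $v_q(a_i)\geq \delta i$ has an inverse in $M_\delta$: writing $F^{-1}=1+\sum_{i\geq 1} b_iT^i$, the recursion $b_i=-\sum_{j=1}^i a_j b_{i-j}$ yields $v_q(b_i)\geq \delta i$ by induction. Moreover, if $U,V\in M_\delta$ are two such units with $U\equiv V\mod I_\delta$, the identity $U^{-1}-V^{-1}=U^{-1}(V-U)V^{-1}$, together with the fact that $I_\delta$ is an ideal of $M_\delta$, gives $U^{-1}\equiv V^{-1}\mod I_\delta$. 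Applied to the displayed congruence, this yields the claim.

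The only (mild) obstacle is the inversion step, which amounts to a routine check that the units of $M_\delta$ behave well modulo $I_\delta$. The substantive content of the corollary lies entirely in Theorem~\ref{Lcong}; the corollary itself is essentially a repackaging of the special case where no proper subset of $\{1,\ldots,n\}$ contributes.
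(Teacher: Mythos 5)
Your argument is correct and matches what the paper leaves implicit: under the stated hypothesis the product in Theorem~\ref{Lcong} reduces to the single factor $I=\{1,\ldots,n\}$, and the only nontrivial point is inverting modulo $I_\delta$ when $n$ is even, which you handle correctly by observing that $I_\delta$ is an ideal of $M_\delta$, that power series with constant term $1$ are units of $M_\delta$, and that $U^{-1}-V^{-1}=U^{-1}(V-U)V^{-1}$. The paper states the corollary without a separate proof, so your write-up is essentially the intended deduction made explicit.
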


\end{document}